\newcommand*\samethanks[1][\value{footnote}]{\footnotemark[#1]}
\newtheorem{Theorem}{Theorem}[section]
\newtheorem{Lemma}[Theorem]{Lemma}
\newtheorem{Definition}[Theorem]{Definition}
\newtheorem{Claim}[Theorem]{Claim}
\newtheorem{Proposition}[Theorem]{Proposition}
\newtheorem{Corollary}[Theorem]{Corollary}
\newtheorem{Observation}[Theorem]{Observation}
\theoremstyle{definition} \newtheorem{Remark}[Theorem]{Remark}
\begin{document}
\title{The Toucher-Isolator game}
\author[1]{Chris Dowden\thanks{Supported by
Austrian Science Fund (FWF): P27290.}} 
\author[1]{Mihyun Kang\thanks{Supported by
Austrian Science Fund (FWF): P27290 and W1230.}}
\author[2]{Mirjana Mikala\v{c}ki\thanks{Partly supported by Ministry of Education, Science and Technological Development, Republic of Serbia, Grant nr. 174019}}
\author[2]{Milo\v{s} Stojakovi\'{c}\samethanks}
\affil[1]
{Institute of Discrete Mathematics, 
Graz University of Technology,
Austria.
Email: dowden@math.tugraz.at,
kang@math.tugraz.at.}
\affil[2]
{Department of Mathematics and Informatics, Faculty of Sciences, University of Novi Sad, Serbia.
Email: mirjana.mikalacki@dmi.uns.ac.rs, milos.stojakovic@dmi.uns.ac.rs.}
\setlength{\unitlength}{1cm}
\maketitle

\begin{abstract}
We introduce a new positional game called `Toucher-Isolator', 
which is a quantitative version of a Maker-Breaker type game.
The playing board is the set of edges of a given graph $G$,
and the two players, Toucher and Isolator, claim edges alternately.
The aim of Toucher is to `touch' as many vertices as possible 
(i.e.~to maximise the number of vertices
that are incident to at least one of her chosen edges), 
and the aim of Isolator is to minimise the number of vertices that are so touched.

We analyse the number of untouched vertices $u(G)$ at the end of the game when both Toucher and Isolator play optimally, obtaining results both for general graphs and for particularly
interesting classes of graphs,
such as cycles, paths, trees, and $k$-regular graphs.
We also provide tight examples.
\end{abstract}

\section{Introduction}

\subsection{Background and motivation}

One of the most fundamental and enjoyable mathematical activities is to play and analyse games,
ranging from simple examples such as snakes and ladders and noughts and crosses
to much more complex games like chess and bridge.

%Some of the earliest investigations into such matters
%were motivated by gambling on real-life card games
%(see e.g.~\cite{bell}),
%and the formal study of game theory as a serious mathematical subject
%can perhaps be considered to have been introduced later by John von Neumann~\cite{von}.

Many of the most natural and interesting games to play involve
pure skill, perfect information, and a sequential order of play.
These are known formally as `combinatorial' games, see e.g.~\cite{winning},
and popular examples include
Connect Four, Hex, noughts and crosses, draughts, chess, and go.

Often,
a combinatorial game might consist of two players
alternately `claiming' elements of the playing board
(e.g.~noughts and crosses, but not chess) 
with the intention of forming specific winning sets,
and such games are called `positional' combinatorial games
(for a comprehensive study, see~\cite{beckbook} or~\cite{milosbook}).
In particular,
much recent research has involved positional games
in which the board is the set of edges of a graph,
and where the aim is to claim edges
in order to form subgraphs with particular properties.

A pioneering paper in this area was that of
Chv\'{a}tal and Erd\H{o}s~\cite{chv},
in which the primary target was to form a spanning tree.
Subsequent work has then also involved other standard graph structures and properties,
such as
cliques~\cite{bal, geb12},
perfect matchings~\cite{mik, hef09},
Hamilton cycles~\cite{hef09,kriv11},
planarity~\cite{hef08},
%tournaments~\cite{cle},
%bounded degree spanning subgraphs~\cite{all},
%other given subgraphs~\cite{beck94,bed},
%$k$-connectedness~\cite{fer14},
and given minimum degree~\cite{geb09}.
%as well as extensions to hypergraphs~\cite{beck82,hef07}.
Part of the appeal of these games is that there are several different versions.
Sometimes, in the so-called strong games, both players aim to be the first to form a winning set
(c.f.~three-in-a-row in a game of noughts and crosses).
In others,
only Player $1$ tries to do this,
and Player $2$ simply seeks to prevent her.

This latter class of games are known as `Maker-Breaker' positional games. A notable result here is the Erd\H{o}s-Selfridge Theorem~\cite{erdself},
which establishes a simple but general condition for the existence of a winning strategy for Breaker
in a wide class of such problems.
A quantitative generalisation of this format then involves games in which
Player $1$ aims to form as many winnning sets as possible,
and Player $2$ tries to prevent this (i.e.~Player $2$ seeks to minimise the number of winning sets formed by Player $1$).

In this paper,
we introduce a new quantitative version of a Maker-Breaker style positional game,
which we call the `Toucher-Isolator' game.
Here, the playing board is the set of edges of a given graph,
the two players claim edges alternately, the aim of Player $1$ (Toucher)
is to `touch' as many vertices as possible (i.e.~to maximise the number of vertices
that are incident to at least one of her edges), and the aim of Player $2$ (Isolator)
is to minimise the number of vertices that are touched by Toucher (i.e.~to claim all edges incident to a vertex, and do so for as many vertices as possible).

This problem is thus simple to formulate and seems very natural, with connections to other interesting games,
such as claiming spanning subgraphs, matchings, etc. 
In particular,
we note that it is related to the well-studied Maker-Breaker vertex isolation game (introduced by Chv\'{a}tal and Erd\H{o}s~\cite{chv}), where Maker's goal is to claim all edges incident to a vertex,
and it is hence also related to the positive min-degree game (see~\cite{balplu, milosbook, hef11}), where Maker's goal is to claim at least one edge of every vertex.

Our Toucher-Isolator game can be thought of as a quantitative version of these games, where Toucher now wants to claim at least one edge on \emph{as many} vertices as possible, while Isolator aims to isolate \emph{as many} vertices as possible. However, the game has never previously been investigated,
and so there is a vast amount of unexplored territory here, with many exciting questions.
What are the best strategies for Toucher and Isolator?
How do the results differ depending on the type of graph chosen?
Which graphs provide the most interesting examples?

\subsection{Results}

Given a graph $G=(V(G),E(G))$,
% recall that    (mSt: We cannot recall this, as this is where we first mention u(G). Abstract is a stand-alone part that should not be referred to in the rest of the manuscript.) 
we use $u(G)$ to denote the number of untouched vertices
at the end of the game when both Toucher and Isolator play optimally.
We obtain both upper and lower bounds on $u(G)$,
some of which are applicable to all graphs
and some of which are specific to particular classes of graphs
(e.g.~cycles or trees).
For every one of these,
we also demonstrate that the bounds are tight
by providing examples of graphs which satisfy them exactly
(in most cases, we in fact give a family of tight examples
to show that there are infinitely many values of $n=|V(G)|$
for which equality holds).
We shall now present all of these results, the proofs of which will be given later.

Clearly,
one of the key parameters in our game 
will be the degrees of the vertices
(although,
as we shall observe later,
the degree sequence alone does not fully determine the value of $u(G)$).
In our bounds for general $G$,
perhaps the most significant is the upper bound of Theorem~\ref{gen2}.
Here,
we find that it suffices just to consider the vertices with degree at most three
(we again re-iterate that all our bounds are tight).

\begin{Theorem} \label{gen2}
	For any graph $G$, we have
	\begin{displaymath}
		d_{0} + \frac{1}{2}d_{1} - 1 \leq
		u(G) \leq d_{0} + \frac{3}{4}d_{1} + \frac{1}{2}d_{2} + \frac{1}{4}d_{3},
	\end{displaymath}
where $d_{i}$ denotes the number of vertices with degree exactly $i$.
\end{Theorem}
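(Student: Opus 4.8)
The plan is to treat the two inequalities separately, viewing the lower bound as a guarantee for Isolator and the upper bound as a guarantee for Toucher. Throughout I record the basic reformulation that a vertex is untouched at the end precisely when \emph{every} edge incident to it has been claimed by Isolator; in particular the $d_0$ isolated vertices are untouched no matter how the game is played, a leaf is untouched if and only if Isolator claims its unique (pendant) edge, and a vertex of degree $i$ is untouched if and only if Isolator claims all $i$ of its edges. Since the board is exhausted by the end of the game, every edge, and hence every pendant edge, is eventually owned by one of the players.

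For the lower bound I would let Isolator play a pairing strategy on the set $P$ of pendant edges (those incident to at least one leaf). Assigning to each $e\in P$ the weight $w(e)\in\{1,2\}$ equal to the number of leaves incident to $e$, we have $\sum_{e\in P} w(e)=d_1$, and Isolator saves exactly $w(e)$ leaves whenever she claims $e$. I would pair the edges of $P$ so as to maximise the guaranteed saved weight: pair weight-$2$ edges with weight-$2$ edges and weight-$1$ edges with weight-$1$ edges as far as possible, pairing a leftover weight-$2$ edge with a weight-$1$ edge if necessary. Whenever Toucher claims an edge of a pair Isolator answers in the same pair, and all other Toucher moves are answered arbitrarily. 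As the second player Isolator then owns at least one edge of every pair, so the weight she secures is at least $\sum_{\text{pairs}}\min(w,w')$, and a short calculation over the parities of the numbers of weight-$1$ and weight-$2$ edges shows this is at least $\tfrac12 d_1-1$. Together with the $d_0$ forced isolated vertices this gives $u(G)\ge d_0+\tfrac12 d_1-1$.

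For the upper bound I would use a potential-function argument with
\[
\phi(i)=\max(0,\,4-i),\qquad \Phi=\sum_{v\ \text{untouched}}\phi\big(r(v)\big),
\]
where $r(v)$ denotes the number of edges at $v$ that are still unclaimed (so for an untouched vertex $r(v)$ equals $\deg(v)$ minus the number of Isolator-edges at $v$). At the start $\Phi=\sum_v\phi(\deg v)=4d_0+3d_1+2d_2+d_3$, and at the end $\Phi=4\,u(G)$ because every untouched vertex then has $r(v)=0$ and $\phi(0)=4$. Hence the whole inequality is \emph{exactly} equivalent to the assertion that Toucher can force $\Phi_{\mathrm{final}}\le\Phi_{\mathrm{initial}}$. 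The engine of the argument is the per-move effect: an Isolator move on an edge $xy$ raises $\Phi$ by $1$ for each untouched endpoint with current $r\le 4$, hence by at most $2$, whereas a Toucher move decreases $\Phi$ by $\phi(r(x))+\phi(r(y))$, summed over the untouched endpoints of the edge she claims. I would let Toucher respond to Isolator's threats by claiming an edge at an untouched vertex of smallest current $r$, thereby removing the largest available term of $\Phi$.

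The main obstacle is that $\Phi$ is \emph{not} monotone round-by-round, so the usual Erd\H{o}s--Selfridge bookkeeping does not close immediately. The difficulty is concentrated at $r=4$: since $\phi(4)=0$, touching a vertex at remaining degree $4$ buys nothing in the potential, yet a single Isolator edge joining two untouched vertices of remaining degree $4$ raises $\Phi$ by $2$ at once (this is exactly what happens on graphs such as $K_5$, where $\Phi_{\mathrm{initial}}=0$ but $\Phi$ must rise and then return to $0$). I therefore expect the real work to be an amortised, global balancing of total decrease against total increase rather than a per-round inequality. The two structural features I would exploit are that every edge lies in exactly two of the isolation-sets $\{\,e:\ e\ni v\,\}$, and that a vertex of degree $\ge 4$ contributes nothing to $\Phi$ until Isolator has already invested several moves in it; together these should let Toucher recoup, by the end of the game, every unit of potential that Isolator forces up, and in particular force every vertex of degree $\ge 4$ to be touched.
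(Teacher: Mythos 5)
Your lower bound is correct and is essentially the paper's argument in different clothing: the paper has Isolator follow a priority order on the pendant edges (first those with both endpoints of degree $1$, then the rest), which guarantees the same counts as your weight-matched pairing, and both versions lose at most $1$ to parity. No complaint there.

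The upper bound, however, has a genuine gap, and it sits exactly where you point to it. Your reformulation $\Phi_{\mathrm{final}}=4u(G)$, $\Phi_{\mathrm{initial}}=4d_0+3d_1+2d_2+d_3$ is an \emph{exact} restatement of the inequality to be proved, with zero slack, so nothing has been gained until you exhibit a Toucher strategy and prove it forces $\Phi_{\mathrm{final}}\le\Phi_{\mathrm{initial}}$. You correctly observe that the Erd\H{o}s--Selfridge per-round inequality fails (Isolator can gain $+2$ by playing between two untouched vertices of remaining degree $4$, while every edge available to Toucher may have potential $0$ at both endpoints, as on $K_5$), but the proposed rescue --- ``an amortised, global balancing'' by which Toucher ``should'' recoup every unit --- is precisely the content of the theorem and is asserted rather than proved. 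The two structural facts you list (each edge lies in two vertex-stars; high-degree vertices contribute nothing until Isolator has invested in them) do not by themselves yield the amortisation: the net contribution of each vertex to $\Phi_{\mathrm{final}}-\Phi_{\mathrm{initial}}$ is $+4-\phi(\deg v)$ if untouched and $-\phi(\deg v)$ if touched, which just reproduces the identity, and it is not clear that a greedy response rule closes the books. So the argument is incomplete as it stands, and I am not convinced this potential can be made to work without substantial new ideas.

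For comparison, the paper's proof of the upper bound avoids potentials entirely and uses an orientation-based pairing strategy for Toucher: add an auxiliary vertex joined to all odd-degree vertices, orient $G$ along Euler tours of the resulting even graph, pair two incoming edges at every vertex with in-degree at least $2$, and pair up (arbitrarily, among themselves) the unique incoming edges of the vertices with in-degree exactly $1$. Responding within pairs, Toucher touches every vertex with in-degree at least $2$ --- in particular every vertex of degree at least $4$ --- and at least half of the vertices with in-degree $1$. Running the same argument with outgoing edges and averaging the two resulting bounds produces exactly the coefficients $\tfrac34,\tfrac12,\tfrac14$ on $d_1,d_2,d_3$. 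If you want to salvage your write-up, I would replace the potential argument by something of this kind; the averaging of two pairing bounds is the step that handles the odd degrees $1$ and $3$, which is where your coefficients $\tfrac34$ and $\tfrac14$ have to come from.
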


A notable consequence of this result is that there will be no untouched vertices
for any graph with minimum degree at least four. 
We shall later see 
(in Theorem~\ref{u>0example})
that this is not always true for graphs with minimum degree three.

For certain degree sequences,
the bounds given in Theorem~\ref{gen2} can in fact both be improved by our next result.

\begin{Theorem} \label{gen1}
For any graph $G$, we have
\begin{displaymath}
\sum_{v \in V(G)} 2^{-d(v)} - \frac{|E(G)|+7}{8} \leq
u(G) \leq \sum_{v \in V(G)} 2^{-d(v)},
\end{displaymath}
where $d(v)$ denotes the degree of vertex $v$.

Equivalently, we have
\begin{displaymath}
\sum_{i \ge 0} 2^{-i} d_i - \frac{|E(G)|+7}{8} \leq
u(G) \leq \sum_{i \ge 0} 2^{-i} d_i,
\end{displaymath}
where $d_{i}$ again denotes the number of vertices with degree exactly $i$.
\end{Theorem}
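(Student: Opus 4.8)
The plan is to analyse both inequalities through a single \emph{potential function} that tracks how close each vertex is to being isolated. For a position in the game, call a vertex \emph{untouched} if Toucher owns none of its incident edges, and for such a vertex let $r(v)$ denote the number of its incident edges that are still unclaimed. I would set
\[
w(v) = \begin{cases} 2^{-r(v)} & \text{if } v \text{ is untouched},\\ 0 & \text{if } v \text{ is touched},\end{cases} \qquad \Phi = \sum_{v \in V(G)} w(v).
\]
The two endpoint identities are immediate: at the start $r(v)=d(v)$ for every $v$, so $\Phi = \sum_v 2^{-d(v)}$, while at the end every edge is claimed, so $r(v)=0$ and each surviving untouched (i.e.\ isolated) vertex contributes $2^{0}=1$, giving $\Phi = u(G)$. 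The equivalent reformulation in terms of the $d_i$ is just the regrouping $\sum_v 2^{-d(v)} = \sum_{i\ge 0} 2^{-i} d_i$. It remains to control how $\Phi$ evolves, and a one-line check shows that claiming an edge $xy$ changes $\Phi$ by exactly $\pm(w(x)+w(y))$ summed over untouched endpoints: an Isolator move halves the remaining degree of each untouched endpoint and hence \emph{doubles} its weight, whereas a Toucher move \emph{zeroes} the weight of each (now touched) endpoint.

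For the upper bound I would let Toucher always claim an unclaimed edge maximising $w(x)+w(y)$, assuming Toucher moves first (she is Player~$1$). Pairing each Toucher move with the Isolator move immediately following it, Toucher's move decreases $\Phi$ by the current maximum weight-sum over all edges; since her move only zeroes weights and deletes the maximising edge, the maximum weight-sum over the \emph{remaining} edges cannot exceed this value, so Isolator's reply increases $\Phi$ by at most the same amount. Hence every pair has nonpositive net effect, and an unpaired final Toucher move (when $|E(G)|$ is odd) only helps. Therefore $\Phi$ never rises above its initial value, giving $u(G) = \Phi_{\mathrm{final}} \le \Phi_0 = \sum_v 2^{-d(v)}$.

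The lower bound is the harder half, and here I would give Isolator a \emph{threat-neutralising} strategy. Call an untouched vertex with $r(v)=1$ \emph{critical} (weight $\tfrac12$, one Isolator-edge from isolation), and an edge \emph{double-critical} if both endpoints are critical. The crucial structural fact is that a Toucher move never decreases $r(\cdot)$ of an untouched vertex, so Toucher can never \emph{create} a critical or double-critical vertex; only Isolator's own moves can. Isolator's strategy is to defuse the most dangerous configuration: claim a double-critical edge if one exists (gaining $1$), otherwise finish a critical vertex (gaining $\tfrac12$), otherwise play a weight-maximising edge. The aim is to maintain, before each Toucher move, the invariant that no double-critical edge is present; under this invariant at most one endpoint of the edge Toucher takes has weight $\tfrac12$ and the other has weight at most $\tfrac14$, so each Toucher move costs at most $\tfrac34$, while Isolator's paired reply recovers at least $\tfrac12$ whenever a critical vertex is available. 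A typical (Toucher, Isolator) pair then changes $\Phi$ by at least $-\tfrac14$, and summing over the at most $\tfrac12|E(G)|$ pairs yields the main term $-\tfrac18|E(G)|$, with the additive constant $\tfrac78$ absorbing boundary effects.

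The main obstacle is precisely the bookkeeping behind this last paragraph: one must verify that Isolator can genuinely sustain the ``no double-critical edge'' invariant even though her own completing moves reduce $r(\cdot)$ at a neighbour and may themselves spawn a fresh critical vertex, and one must handle the rounds in which no critical vertex is available (where the $\tfrac12$ gain fails but, compensatingly, all weights are at most $\tfrac14$ and so Toucher's damage is correspondingly smaller). Turning the heuristic ``$-\tfrac14$ per pair'' into the precise bound $\Phi_0 - \Phi_{\mathrm{final}} \le \tfrac18(|E(G)|+7)$ — pinning down the exact constant $7$ against boundary cases such as initial isolated $K_2$'s and the parity of $|E(G)|$ — is where the real care lies; I expect this to need an amortised argument charging Toucher's losses against claimed edges, together with a short case analysis of the local configurations around the edge Isolator chooses.
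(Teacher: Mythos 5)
Your potential $\Phi$ is precisely the paper's Danger function, and your upper-bound argument (Toucher greedily maximises $w(x)+w(y)$; each Toucher--Isolator pair has non-positive net effect by maximality; an unpaired final Toucher move only helps) is exactly the paper's proof, so that half is complete and correct, as is the reformulation in terms of the $d_i$.

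The lower bound, however, has a genuine gap, and it stems from pairing the moves in the wrong order. You pair each Toucher move with the Isolator reply that follows it, and then need Toucher's damage in a round to be recoverable by Isolator afterwards; but nothing ties the two together, and your proposed invariant does not hold. Concretely: (a) in a round with no critical vertex, Toucher's damage can be up to $\tfrac12$ (two endpoints of weight $\tfrac14$) while Isolator's best available reply may gain only $\tfrac18$ (if every other untouched vertex has three or more free edges), so the ``$-\tfrac14$ per pair'' heuristic fails without the amortisation you defer; and (b) the ``no double-critical edge before Toucher's move'' invariant is not maintainable --- when Isolator finishes a critical vertex $v$ along $uv$ he drops $r(u)$ by one, possibly to $1$, and if $u$'s last free edge leads to another critical vertex $w$ he has just handed Toucher a double-critical edge $uw$ worth a full $1$, with no chance to defuse it before Toucher moves (it also fails at time zero for any $K_2$ component). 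The paper avoids all of this by reversing the pairing: Toucher's unpaired first move costs at most $1$, since every vertex then has Danger at most $\tfrac12$; thereafter Isolator moves first in each pair and greedily claims the edge of maximum Danger-sum $r$, gaining $r$; Toucher's response then loses at most $r+\tfrac14$, because every edge disjoint from Isolator's choice still has Danger-sum at most $r$ by maximality, while an edge sharing a vertex $u$ with Isolator's choice has Danger-sum at most $r+\mathrm{D}(u)\le r+\tfrac14$, as $u$ had at least two free edges before Isolator's move. This gives a loss of at most $\tfrac14$ per pair over $\bigl\lfloor (|E(G)|-1)/2 \bigr\rfloor$ pairs, a possible final Isolator move that cannot lose, and hence a total loss of at most $1+\tfrac18\left(|E(G)|-1\right)=\tfrac18\left(|E(G)|+7\right)$, which is exactly the constant you were unable to pin down. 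I would replace the threat-neutralising strategy with this greedy-Isolator argument rather than trying to repair the invariant.
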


Note also that $|E(G)|$ will be small if the degrees are small,
and so Theorem~\ref{gen1} then provides a fairly narrow interval for the value of $u(G)$
(observe that Theorem~\ref{gen2} already provides a narrow interval if the degrees are large).

Moving on from these general bounds,
one very natural particular graph to consider is the cycle $C_{n}$ on $n$ vertices.
It is fascinating to play the game on such a graph
and to try to determine the optimal strategies and the proportion of untouched vertices.
We again obtain tight upper and lower bounds,
both for $C_n$ and for the closely related game on $P_n$
(the path on $n$ vertices).

\begin{Theorem} \label{cyc2}
	For all $n$, we have
\begin{displaymath}
\frac{3}{16} (n-3) \leq u(C_{n}) \leq \frac{n}{4}.
\end{displaymath}
\end{Theorem}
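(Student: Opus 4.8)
The plan is to establish the upper and lower bounds separately, since they require describing strategies for the two different players. For the upper bound $u(C_n) \le n/4$, I would exhibit a strategy for Toucher that guarantees at least $3n/4$ touched vertices. The natural idea is for Toucher to treat the cycle as a collection of edges and respond locally: whenever Isolator claims an edge, Toucher should claim a nearby edge that touches a previously untouched vertex, thereby ensuring that Isolator cannot isolate too many vertices. Since every vertex of $C_n$ has degree exactly $2$, a vertex is untouched precisely when Isolator owns both of its incident edges; I would try to set up a pairing or charging argument so that each vertex Isolator successfully isolates can be charged against a bounded number of vertices that Toucher definitely touches. A clean approach would be to partition the edge set into consecutive blocks and argue that within each block Toucher can force a fixed fraction of the vertices to be touched, yielding the constant $1/4$.

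For the lower bound $u(C_n) \ge \tfrac{3}{16}(n-3)$, I would describe a strategy for Isolator that guarantees at least $\tfrac{3}{16}(n-3)$ untouched vertices. Again using the fact that isolating a vertex means claiming both of its incident edges, Isolator wants to accumulate pairs of adjacent edges. A promising strategy is for Isolator to maintain a set of \emph{threats}, i.e.~vertices for which he already owns one incident edge and whose other incident edge is still free; on each turn he either completes such a threat (isolating a vertex) or opens a new one, reacting to Toucher's moves so as to always have profitable local structure available. The constant $\tfrac{3}{16}$ together with the additive $-3$ correction suggests a block decomposition into segments of roughly eight edges, within each of which Isolator forces about $\tfrac{3}{2}$ untouched vertices, with a bounded loss at the boundaries accounting for the $-3$ term.

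The main obstacle I anticipate is handling the interaction between adjacent blocks and the \emph{boundary effects} on the cycle. Because the two incident edges of a vertex may lie in different blocks, a purely local analysis risks double-counting or leaving gaps where neither player's local guarantee applies; the global cyclic structure must be accounted for carefully. For the lower bound in particular, the precise constant $\tfrac{3}{16}$ will depend on setting up Isolator's response correctly so that Toucher cannot ``waste'' Isolator's moves by forcing him to respond far from where he has built up threats. I would expect the bulk of the technical work to lie in verifying that Isolator's strategy is well-defined in all cases (i.e.~a suitable move always exists), and in the careful bookkeeping that converts the per-block guarantee into the stated global bound, including the $-3$ adjustment for the initial configuration and parity issues. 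Finally, I would connect the result to $P_n$ via the stated companion bounds, since a path is a cycle with one edge removed and the strategies should transfer with only minor modification.
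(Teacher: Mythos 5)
Your outline correctly guesses the general shape of the argument (a Toucher strategy for the upper bound, an Isolator strategy with a partition of the board for the lower bound), but both halves have genuine gaps where the actual work lies. For the upper bound, you propose a local pairing/charging strategy organised into blocks. The paper instead gets $u(C_n)\le n/4$ as an immediate corollary of the general bound $u(G)\le\sum_{v}2^{-d(v)}$ of Theorem~\ref{gen1}, proved by a potential-function (``Danger'') argument: Toucher always claims the edge maximising the sum of the Dangers of its two endpoints, so the total Danger never increases over a Toucher--Isolator pair of moves, and the initial Danger on a cycle is $n/4$. Your local approach is not obviously adequate: the natural pairing of consecutive edges only guarantees that every \emph{other} vertex is touched, i.e.\ $u(C_n)\le n/2$, and pushing a block/charging argument down to $n/4$ requires resolving exactly the cross-block interactions you flag as an obstacle; you give no mechanism for doing so. You should at least consider the potential-function route, which sidesteps all boundary issues.

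For the lower bound, your plan matches the paper's framework (Isolator partitions the edges into segments after Toucher's first move and always responds in the segment Toucher just played in), but the entire technical core is missing. Your per-block guess of ``about $\tfrac32$ untouched vertices per $8$ edges'' cannot be a per-block guarantee as stated, since it is not an integer; this is precisely why the paper works with segments of $16$ edges and proves (Lemma~\ref{3of16lemma}) that Isolator can secure \emph{three} untouched internal vertices from each such segment. That lemma rests on a careful case analysis built from two sub-claims about runs of three and of five consecutive free edges (Claims~\ref{cl1} and~\ref{cl2}), and it is where essentially all of the difficulty sits. The $-3$ in the statement does not come from ``the initial configuration'' in any generic sense: it comes from the single leftover segment of $k\le 16$ edges containing Toucher's first move, where Isolator can only guarantee $\tfrac{3}{16}(k-3)$ untouched vertices, and verifying this again requires the two sub-claims. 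Without a concrete per-segment strategy and its verification, the outline establishes neither constant.
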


\begin{Theorem} \label{path1}
	For all $n$, we have
	\begin{displaymath}
		\frac{3}{16} (n-2) \leq u(P_{n}) \leq \frac{n+1}{4}.
	\end{displaymath}
\end{Theorem}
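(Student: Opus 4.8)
The plan is to prove the two inequalities separately, in each case by exhibiting an explicit strategy for the relevant player, and to treat $P_n$ as a ``cut‑open'' version of $C_n$ so that the strategies behind Theorem~\ref{cyc2} can be reused with modifications only at the two endpoints. Throughout I label the edges $e_1,\dots,e_{n-1}$, where $e_i=v_iv_{i+1}$, and I record the basic observation that governs everything: an interior vertex $v_i$ (with $2\le i\le n-1$) is untouched exactly when Isolator owns both $e_{i-1}$ and $e_i$, while an endpoint $v_1$ (resp.\ $v_n$) is untouched exactly when Isolator owns $e_1$ (resp.\ $e_{n-1}$). Hence, decomposing the set of Isolator's edges into maximal runs of consecutive edges, a run of length $\ell$ isolates exactly $\ell-1$ interior vertices, plus one extra endpoint whenever the run reaches $e_1$ or $e_{n-1}$.

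For the upper bound $u(P_n)\le (n+1)/4$ I would give Toucher a pairing strategy: partition the edges into consecutive pairs $\{e_1,e_2\},\{e_3,e_4\},\dots$ and let Toucher always answer within the pair of Isolator's last move. First I would check that this guarantees Toucher at least one edge of every pair, so that no three consecutive edges can all belong to Isolator. By the run description above, every Isolator run then has length at most two, so each run isolates at most one interior vertex while consuming at least two of Isolator's at most $\lceil (n-1)/2\rceil$ edges; this already bounds the number of untouched interior vertices by roughly $(n-1)/4$. The genuinely fiddly point is the two endpoints, which the plain pairing does not protect; I would handle them by modifying Toucher's opening moves (for instance claiming $e_1$ and $e_{n-1}$ outright, or pairing them asymmetrically) and then re-examining the count so that the endpoint contributions are absorbed into the $+1$ of $(n+1)/4$. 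Bookkeeping of parity and of who moves first is needed here, but the bound is stated with enough slack to cover both conventions.

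For the lower bound $\tfrac{3}{16}(n-2)\le u(P_n)$ I would design a strategy for Isolator guaranteeing the isolation of a $\tfrac{3}{16}$-fraction of the interior vertices, adapting the Isolator strategy underlying the cycle bound in Theorem~\ref{cyc2}. The idea is to have Isolator repeatedly create a \emph{double threat}: claim one edge $e_{i-1}$ at a degree-two vertex $v_i$ so that, unless Toucher immediately answers on $e_i$, Isolator takes $e_i$ next and isolates $v_i$. Toucher can neutralise each individual threat, but only by spending a move, and amortising Isolator's gains against Toucher's forced responses over the whole path yields a guaranteed isolation rate, with the constant $\tfrac{3}{16}$ emerging from this accounting. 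To make this rigorous I would partition the path into disjoint blocks, show that Isolator secures the claimed number of isolated vertices in each block independently of play elsewhere, and sum over the blocks; the slightly smaller offset $n-2$ (compared with $n-3$ for $C_n$) comes from the two degree-one endpoints that a path has but a cycle does not.

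The main obstacle, as usual in these games, is the lower bound: one must commit to a concrete Isolator strategy and prove that Toucher cannot beat the amortised guarantee, which requires controlling the chains of threats and responses and, in particular, making the block decomposition robust against Toucher wasting moves or attacking block boundaries. The endpoint adjustments in both directions are comparatively routine once the interior argument is in place, and I would verify tightness separately by pointing to the tight examples promised in the introduction.
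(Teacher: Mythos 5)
Your upper bound takes a genuinely different route from the paper, which proves $u(P_n)\le \frac{n+1}{4}$ not by pairing but by the Danger-function argument of Theorem~\ref{gen1}, sharpened by checking that over the first two rounds Toucher removes at least $\frac{6}{4}$ of the total Danger while Isolator restores at most $\frac{5}{4}$, gaining the extra $\frac{1}{4}$ needed to pass from the initial Danger $\frac{n+2}{4}$ to $\frac{n+1}{4}$. Your pairing idea can be pushed through, but not with the accounting you give: Isolator still ends with $\lfloor (n-1)/2\rfloor$ edges, each of his runs has length at most two, and the naive count is (number of length-two runs) plus (up to two isolated endpoints), i.e.\ at most $\frac{n-1}{4}+2$, which overshoots the target by $\frac{3}{2}$; protecting one endpoint with the opening move still leaves roughly $\frac{n+3}{4}$. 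The claim that the bound has ``enough slack'' is therefore not true of the raw count. What actually closes the gap is the sharper observation that the pairing forces Isolator to own at most one edge per pair, so his length-two runs form a matching on consecutive pairs (at most $\lfloor m/2\rfloor$ of them, where $m$ is the number of pairs), and isolating the surviving endpoint consumes the outer edge of the last pair, which would otherwise be needed for one of these adjacencies; with Toucher opening on $e_1$ (or on the unpaired edge when $n-1$ is odd) this conflict argument yields exactly $\frac{n+1}{4}$. You must also fix the opening: $P_5$ with Toucher opening on $e_2$ loses both endpoints and gives $u=2>\frac{3}{2}$, so ``play arbitrarily first'' is not an option here.

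For the lower bound your plan --- block decomposition, respond-in-the-same-block, amortised double threats --- is exactly the paper's scheme, but the proposal omits everything that makes it work. The constant $\frac{3}{16}$ does not fall out of a generic threat-and-response amortisation; that kind of accounting naturally gives the rate $\frac{1}{8}$, which is precisely what Theorem~\ref{gen1} already yields for paths. It comes instead from a specific and laborious lemma (Lemma~\ref{3of16lemma}) asserting that Isolator, moving second in a block of $16$ consecutive edges, can secure three untouched internal vertices there; its proof requires two auxiliary claims about segments of three and five free edges (the five-edge claim having four distinct outcomes that must each be threaded through the block analysis) and a four-way case split on where Toucher's first move in the block lands. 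You would also need the leftover analysis: the paper splits off two end segments whose sizes depend on $(n-1)\bmod 16$ and exploits the two leaves to extract the missing isolated vertices, and this is where the offset $n-2$ (rather than $n-3$ as for cycles) is actually earned. Without the $16$-edge lemma and the end-segment cases, the proposal contains no proof of the $\frac{3}{16}$ rate, and that rate is the entire difficulty of the theorem.
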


We also extend the game to general $2$-regular graphs (i.e.~unions of disjoint cycles).
Our main achievement here is to obtain a \emph{tight} lower bound of $u(G) \ge \frac{n-3}{6}$,
which (by a comparison with the lower bound of Theorem~\ref{cyc2})
also demonstrates that $u(G)$ is not solely determined by the degree sequence.

\begin{Theorem} \label{2reg2}
For any $2$-regular graph $G$ with $n$ vertices, we have
\begin{displaymath}
\frac{n-3}{6} \leq u(G) \leq \frac{n}{4}.
\end{displaymath}
\end{Theorem}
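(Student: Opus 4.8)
The plan is to treat the two bounds separately, with essentially all the difficulty in the lower bound. The upper bound is immediate from Theorem~\ref{gen1}: since $G$ is $2$-regular, every vertex has degree exactly $2$, so $\sum_{v \in V(G)} 2^{-d(v)} = n\cdot 2^{-2} = n/4$, and the upper bound $u(G)\le n/4$ in Theorem~\ref{gen1} gives the claim directly. So I would spend the whole argument on the lower bound $u(G)\ge \tfrac{n-3}{6}$ by exhibiting a strategy for Isolator.

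The first thing I would point out is that the naive approach fails. Writing $G = C_{n_1}\cup\cdots\cup C_{n_k}$ as a disjoint union of cycles and applying the single-cycle bound $u(C_{n_i})\ge \tfrac{3}{16}(n_i-3)$ from Theorem~\ref{cyc2} componentwise yields only $u(G)\ge \tfrac{3}{16}(n-3k)$, which is worthless when there are many short cycles: for a disjoint union of $k$ triangles ($n=3k$) it gives $0$, whereas the truth is $\tfrac{k-1}{2}=\tfrac{n-3}{6}$. The point is that the interaction between components \emph{helps} Isolator, through tempo: on a single triangle with Toucher to move Isolator can isolate nothing, but with the luxury of moving first in a triangle Isolator can grab the two edges meeting at one vertex and isolate it. Thus the proof must use a genuinely global strategy that lets Isolator shuttle between components, and the disjoint-union-of-triangles instance is the tight family I would keep in mind throughout (it also simultaneously proves the remark that the degree sequence does not determine $u(G)$, by comparison with $u(C_n)$).

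For the strategy itself I would use a potential/amortisation argument. Classify each still-untouched vertex by the states of its two incident edges: \emph{open} (both free), \emph{advanced} (one Isolator edge, one free), or \emph{isolated} (both Isolator edges); a vertex incident to a Toucher edge is \emph{dead}. Assign weights, giving an isolated vertex value $1$, an advanced vertex value $\tfrac12$, and $0$ otherwise, and let $\Phi$ be the total. Then $\Phi=0$ initially and, since no free edges remain at the end, $\Phi$ equals exactly the number of isolated (untouched) vertices at the end. Isolator plays greedily: if some free edge completes an advanced vertex, play it, preferring an edge that completes \emph{two} advanced vertices at once; otherwise play a free edge joining two open vertices to create advanced vertices; otherwise play arbitrarily. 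A Toucher move on a free edge kills at most its two endpoints, so it costs at most the weight of those two vertices, while Isolator's greedy reply either completes an advanced vertex (gaining $\tfrac12$, or $1$ on a double-completion) or creates two advanced vertices (gaining $1$). The aim is to show that Isolator can guarantee that $\Phi$ increases by an average of at least $\tfrac13$ over each Toucher--Isolator pair of moves, which over the $\approx n/2$ rounds delivers the $\tfrac{n}{6}$ main term.

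The hard part will be the amortised bookkeeping that pins down this per-round gain. A crude round-by-round estimate only shows that Isolator breaks even, because a Toucher move destroying an advanced vertex (cost $\tfrac12$) can be matched but not obviously beaten by a single completion (gain $\tfrac12$); the surplus must be extracted from the creation moves (gain $1$) and from the double-completions that the greedy priority forces, and making this rigorous requires an invariant ruling out the worst configurations for Isolator (in particular, that Isolator never leaves a free edge whose two endpoints are both advanced). I would also need to account carefully for Toucher's first-move advantage and for parity, which is where the additive constant enters: Toucher moving first can stay one step ahead and effectively deny Isolator roughly one short component, giving up one triangle's worth and so accounting for the $-3$. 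Verifying the invariant and converting the per-round gain into the precise bound $\tfrac{n-3}{6}$, uniformly over all cycle-length profiles, is the crux of the argument.
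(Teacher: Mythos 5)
Your upper bound is correct and is exactly the paper's argument (Theorem~\ref{gen1} applied to a $2$-regular graph gives $\sum_v 2^{-d(v)}=n/4$). You have also correctly identified the essential phenomenon for the lower bound: componentwise application of Theorem~\ref{cyc2} fails on unions of short cycles, the interaction between components helps Isolator through tempo, and an odd union of triangles is the tight family. But the lower bound argument itself has a genuine gap, and moreover its stated quantitative target is unattainable. You aim to show that your potential $\Phi$ gains an average of at least $\tfrac13$ per Toucher--Isolator round, giving the main term $\tfrac n6$ over $\approx n/2$ rounds. On three disjoint triangles there are $4.5$ rounds, so this would force $u(G)\ge \lceil 1.5\rceil=2$; but the paper's Proposition on odd unions of $C_3$'s shows Toucher can hold Isolator to exactly $1$ there. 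So no strategy achieves a uniform per-round gain of $\tfrac13$, and the entire deficit (exactly one component's worth, $\tfrac12$) must be absorbed by a global charging argument showing that the rounds netting less than $\tfrac13$ (e.g.\ Toucher destroying an advanced vertex at cost $\tfrac12$ against a completion gaining only $\tfrac12$) are compensated by creation rounds, with total shortfall at most $\tfrac12$ over the whole game, uniformly in the cycle-length profile and in Toucher's play. You supply neither the invariant nor the charging scheme for this, and you acknowledge it is ``the crux''; as written, a naive round-by-round estimate with your weights only recovers something like the $n/8$ bound of Theorem~\ref{gen1}.

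For contrast, the paper avoids any global potential and does the accounting per component. Cycles of length $k\in\{4,5,6\}\bmod 6$ satisfy $u(C_k)\ge k/6$ directly from Theorem~\ref{cyc2} plus integrality. The remaining components (lengths in $\{1,2,3\}\bmod 6$) are paired up, and two tempo lemmas are proved via explicit $3$-edge-segment strategies: Isolator secures $\tfrac{k-3}{6}$ untouched vertices in a cycle even after conceding the first two moves to Toucher, and $\tfrac{k+3}{6}$ if he moves first. When Toucher opens a paired component, Isolator spends his reply opening the partner; the two bounds then average to $\tfrac{k_1+k_2}{6}$ per pair, and the single possible unpaired leftover component accounts exactly for the $-3$. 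If you want to pursue your potential-function route, you would need to localise it to components in essentially this way; the hard quantitative content of the theorem lives precisely in the step you have deferred.
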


An interesting and natural extension of the game on paths is obtained by considering general trees,
although this additional freedom in the structure can make the problem significantly more challenging.
Here,
we derive the following tight bounds.

\begin{Theorem} \label{tree}
For any tree $T$ with $n>2$ vertices, we have
\begin{displaymath}
\frac{n+2}{8} \leq u(T) \leq \frac{n-1}{2}.
\end{displaymath}
\end{Theorem}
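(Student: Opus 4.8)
The plan is to treat the two inequalities separately, since the upper bound is essentially a counting observation whereas the lower bound requires a genuine strategy for Isolator. For the upper bound $u(T) \le \frac{n-1}{2}$ I would not even analyse the play. A tree on $n$ vertices has exactly $n-1$ edges, so by the end of the game Toucher (as first player) owns $\lceil \frac{n-1}{2}\rceil$ of them. Whatever these edges are, they form a subforest of $T$, and a forest with $k \ge 1$ edges covers at least $k+1$ vertices, because each of its nontrivial components is a tree contributing one more vertex than it has edges. Hence Toucher touches at least $\lceil \frac{n-1}{2}\rceil + 1$ vertices, which gives $u(T) \le n - \lceil\frac{n-1}{2}\rceil - 1 \le \frac{n-1}{2}$. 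As this holds for every Toucher strategy, it holds under optimal play. For tightness I would take the star $K_{1,n-1}$ with $n$ odd: every edge meets the centre, so Toucher's edges cover only the centre and $\frac{n-1}{2}$ leaves, leaving exactly $\frac{n-1}{2}$ untouched leaves.

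The substance of the theorem is the lower bound $u(T) \ge \frac{n+2}{8}$, for which I must exhibit a strategy guaranteeing Isolator at least $\frac{n+2}{8}$ isolated vertices. This cannot simply be inherited from Theorem~\ref{gen1}: for a path that bound only yields $\frac{n-2}{8}$, which is too weak, so the tree structure must be exploited directly. My plan is for Isolator to combine two mechanisms — capturing pendant edges in order to isolate leaves, and capturing \emph{both} edges at suitable degree-$2$ vertices in order to isolate them along pendant chains — organised through a pairing or charging scheme on the edges. Concretely I would root $T$, decompose it into pendant paths and small subtrees hanging off branch vertices, and assign Isolator a local response rule on each piece so that the guarantee on one piece is independent of the play elsewhere; summing the local guarantees should produce one isolated vertex per (roughly) eight vertices, the bottleneck arising from degree-$3$ branch configurations, which matches the $2^{-3}$ term that governs the tight examples.

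The hard part will be controlling the interaction between pieces: a single Isolator move can answer only one Toucher move, so I must ensure the local pairings are edge-disjoint and that Toucher cannot profit by attacking several pieces at once. I expect to handle this by an induction that peels off a bounded pendant subtree $S$ on which Isolator can secure one isolated vertex using only the edges of $S$, and then applies the inductive hypothesis to $T-S$; the delicate point is choosing $S$ small enough (about eight vertices, to match the constant) yet structured so that the local guarantee is genuinely independent of the outside game, and accounting correctly for the additive constant near the leaves of the recursion. Finally I would construct an explicit family of trees, assembled from repeated degree-$3$ gadgets, on which Toucher can hold the number of isolated vertices down to $\frac{n+2}{8}$, thereby certifying tightness.
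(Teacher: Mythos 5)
Your upper bound argument is correct and is genuinely different from (and more elementary than) the paper's. The paper derives $u(T)\le\frac{n-1}{2}$ from the Danger-function bound $u(T)\le\sum_v 2^{-d(v)}$ of Theorem~\ref{gen1}, followed by a convexity argument showing the sum is maximised by the star and an integrality step. Your observation that Toucher's $\lceil\frac{n-1}{2}\rceil$ edges form a subforest of $T$ and therefore meet at least $\lceil\frac{n-1}{2}\rceil+1$ vertices gives the same bound for \emph{every} Toucher strategy, with no game analysis at all; that is a clean improvement in simplicity, and your tight example (odd stars) matches the paper's.

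The lower bound, however, is where the theorem's content lies, and what you have written is a plan rather than a proof; the gap is precisely at the points you flag as ``the hard part.'' Two concrete problems. First, the worst case for a ``one isolated vertex per pendant piece of about eight vertices'' scheme is the path, where there are no branch vertices to organise the decomposition around and the two leaves are the only source of the extra $+2$ in $\frac{n+2}{8}$: a partition into $8$-edge segments with Isolator replying in the segment Toucher just played yields only about $\frac{n-1}{8}$ isolated vertices, which falls short of $\frac{n+2}{8}$, so the boundary accounting you defer is not a technicality but the crux. The paper has to invoke the separate bound $u(P_n)\ge\frac{3}{16}(n-2)$ of Theorem~\ref{path1}, whose proof rests on a sixteen-edge segment lemma (Lemma~\ref{3of16lemma}) with a nontrivial case analysis, and even then $P_7$ must be checked by hand. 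Second, for general trees the paper does not use a decomposition at all: it sharpens the Danger-function argument of Theorem~\ref{gen1} for trees (Toucher's first move loses at most $\frac34$ since no two leaves are adjacent, and the first Isolator--Toucher pair loses at most $\frac18$), obtaining $u(T)\ge\frac{d_1}{2}+\sum_{v:d(v)\ge 2}2^{-d(v)}-\frac{n}{8}-\frac38$, and then optimises over degree sequences using the fact that the non-leaves have average degree below $3$ once $d_1<\frac n3+1$; the case $d_1=2$ is exactly the path case handed off to Theorem~\ref{path1}. Your proposed induction peeling off a pendant subtree $S$ would additionally need the local guarantee on $S$ to survive Toucher playing in $T-S$ while Isolator is busy in $S$ (and vice versa), which you have not established. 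Finally, your intention to certify tightness by an infinite family of degree-$3$ gadget trees is likely unachievable as stated: the paper knows only the single tight example $P_6$ (Proposition~\ref{treeex2}) and explicitly leaves open whether the bound can be improved for large $n$.
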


As mentioned,
it follows from Theorem~\ref{gen2}
that there will be no untouched vertices in $k$-regular graphs if $k \geq 4$,
so it is intriguing to consider the $3$-regular case.
We observe that there are $3$-regular graphs for which $u(G)=0$, and one might expect that this could be true for all such graphs. However, we in fact manage to construct a class of examples for which a constant proportion of vertices remain out of Toucher's reach.

\begin{Theorem} \label{u>0example}
For all even $n \ge 4$,
there exists a $3$-regular graph $G$ with n vertices satisfying 
\begin{displaymath}
u(G) \ge \left \lfloor \frac{n}{24} \right \rfloor.
\end{displaymath}
\end{Theorem}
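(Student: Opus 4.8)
The plan is to reduce the statement to the construction of a single small ``seed'' graph and then to amplify it by a disjoint union. Concretely, I would first isolate the combinatorial core: \emph{there exists a $3$-regular graph $H$ on $24$ vertices with $u(H)\ge 1$}. Granting this, the theorem follows by a locality argument. For $n<24$ the claimed bound $\lfloor n/24\rfloor$ equals $0$, and any $3$-regular graph on $n$ vertices suffices, so assume $n\ge 24$ and set $k=\lfloor n/24\rfloor$. I would then let $G$ be the disjoint union of $k$ copies $H_1,\dots,H_k$ of $H$ together with a bounded $3$-regular ``filler'' on the remaining $r=n-24k$ vertices. Since $0\le r\le 23$ and $r$ is even, a $3$-regular graph on $r$ vertices exists whenever $r\ge 4$; the value $r=0$ needs no filler, while $r=2$ is handled by attaching two extra vertices to one copy of $H$ via the standard cubic-preserving operation (delete two edges $x_1y_1,x_2y_2$ far from the protected vertex and reroute them through two new adjacent vertices $p,q$), which keeps the graph $3$-regular and does not disturb Isolator's strategy near the protected vertex.

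The amplification step is where I would spend care but expect no real difficulty. Isolator plays a \emph{local response} strategy: whenever Toucher claims an edge lying in a copy $H_i$, Isolator answers inside the \emph{same} copy according to the seed strategy witnessing $u(H_i)\ge 1$; when Toucher claims a filler edge, Isolator answers in the filler. Because every Toucher move in $H_i$ is immediately met by an Isolator move in $H_i$, the moves restricted to $H_i$ form exactly a play of the standalone game on $H$ with Toucher moving first, so the seed strategy guarantees one untouched vertex in each $H_i$, giving $u(G)\ge k=\lfloor n/24\rfloor$. The only bookkeeping points are that Isolator always has a legal reply inside $H_i$ right after Toucher moves there (true since an even number of edges of $H_i$ were claimed beforehand, and $|E(H)|=36$ is even), and that any \emph{extra} moves Isolator is ever forced to make inside a copy cannot hurt: as only Toucher can touch a vertex, handing Isolator additional edges is monotone and never decreases the number of untouched vertices.

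The genuine content, and the step I expect to be hardest, is the seed lemma $u(H)\ge 1$. Here Isolator, moving \emph{second}, must end up owning all three edges at some vertex, and this cannot be achieved by a pairing strategy: a pairing only guarantees Isolator one edge of each pair while Toucher chooses which, so Toucher could always grab the incident edge and touch the target. Isolator therefore needs an \emph{adaptive} strategy, and I would design $H$ to contain several candidate vertices whose edge-neighbourhoods are spread far apart, so that Toucher cannot threaten all of them at once; Isolator then commits to fully claiming the three edges around whichever candidate Toucher neglects. Exhibiting such an $H$ explicitly and checking — by a finite case analysis, or a potential-function argument tracking how many of each candidate's three edges remain free — that Toucher can never touch every candidate is the crux of the whole proof. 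As a sanity check I would compare the construction against Theorem~\ref{gen1}, whose upper bound $u(G)\le\sum_{v}2^{-d(v)}=n/8$ for cubic $G$ sits comfortably above $n/24$, confirming that a $24$-vertex seed is of the right order of magnitude.
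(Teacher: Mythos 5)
Your reduction is exactly the paper's: a single $24$-vertex $3$-regular seed graph $H$ with $u(H)\ge 1$, replicated $\lfloor n/24\rfloor$ times with a $3$-regular filler, and a ``respond in the same component'' strategy for Isolator. The amplification step is fine (and your handling of the parity bookkeeping, and of the $r=2$ leftover case, is if anything more careful than the paper, which just says ``any $3$-regular graph on the other vertices''). The problem is that you have not proved the seed lemma — you explicitly flag it as ``the crux of the whole proof'' and then stop at a design philosophy (several candidate vertices with far-apart edge-neighbourhoods, an adaptive commitment by Isolator). That is the entire mathematical content of the theorem, and it is genuinely delicate: by Theorem~\ref{gen2} every cubic graph with one more incoming edge per vertex would be fully touchable, and by Corollary~\ref{3regcor} at most $n/8$ vertices can ever be isolated, so one cannot wave at ``spread-out candidates'' and expect the verification to be routine. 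Without an explicit $H$ and a verified Isolator strategy, the statement is unproven.

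For comparison, the paper's Proposition~\ref{3regex} supplies this seed: $G$ consists of three identical $8$-vertex blocks $H_1,H_2,H_3$ (each a $5$-cycle with an inner triangle, matched vertex-to-vertex) joined by three edges $e_{12},e_{13},e_{23}$. By symmetry Toucher's first move avoids one block, say $H_3$; Isolator takes $e_{23}$, then an inner triangle edge of $H_3$, and a three-case analysis (according to whether Toucher's reply hits the inner triangle, the spokes, or neither) shows Isolator can always force a sequence of ``Toucher must respond or lose a vertex immediately'' threats ending with an isolated vertex of $H_3$. Your intuition that a pairing strategy cannot work for Isolator here and that an adaptive threat-based strategy is needed is correct, but you would need to exhibit a concrete graph and carry out this finite case analysis to complete the proof.
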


\subsection{Techniques and outline of the paper}

One of our key techniques
is to analyse an appropriate \emph{`Danger'} function,
building on an idea first introduced by Erd\H{o}s and Selfridge~\cite{erdself} to prove a general
criterion for Breaker's win, the celebrated Erd\H{o}s-Selfridge Criterion. The same approach was later adapted
by Beck~\cite{beck82} for Maker, resulting in the so-called Weak Win Criterion.
In the quantitative context of our Toucher-Isolator game, it is still useful to define the
Danger function in a similar manner.

\begin{Definition} \label{Dangerdef}
	We shall say that a vertex has Danger $0$ if any of its edges have been taken by Toucher,
	and Danger $2^{-k}$ if all but $k$ of its edges have been taken by Isolator 
	and the remaining $k$ edges have not yet been taken by anyone
	(note that this includes the case when $k=0$,
	and so a vertex has Danger $1$ if all of its edges have been taken by Isolator). 
\end{Definition}

Note that the Danger function can be interpreted as the probability
that a vertex will be untouched if all of its remaining edges
are assigned to Toucher and Isolator independently and uniformly at random.

An equivalent definition is also obtained if 
we update the graph $G$ throughout the game 
by removing the edges claimed by Isolator, 
keep track of the vertices $U(G) \subseteq V(G)$ untouched by Toucher,
and define the total Danger to be $\sum_{v\in U(G)} 2^{-d(v)}$.

The following observation will be key.

\begin{Observation} \label{Dangerobs1}
	The total Danger at the start of the game is
	$\sum_{v} 2^{-d(v)}$, and the total Danger at the end of the game is precisely the number of untouched vertices.
\end{Observation}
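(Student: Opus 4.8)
The plan is to verify both assertions by directly unpacking Definition~\ref{Dangerdef}, handling the start and the end of the game separately, since the quantity in question is simply the sum of the per-vertex Danger values.

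First I would compute the total Danger at the start. At this point no edge has been claimed by either player, so for every vertex $v$ no incident edge has been taken by Toucher (ruling out Danger $0$) and no incident edge has been taken by Isolator. Hence $v$ falls into the second case of Definition~\ref{Dangerdef} with $k = d(v)$, all of its edges still being unclaimed, and so its Danger is $2^{-d(v)}$. Summing over all vertices immediately gives total Danger $\sum_{v} 2^{-d(v)}$, as claimed. In the equivalent ``updated graph'' formulation this is the same computation: we begin with $U(G) = V(G)$ and every vertex at its original degree, yielding $\sum_{v \in U(G)} 2^{-d(v)} = \sum_{v} 2^{-d(v)}$.

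Next I would turn to the end of the game. The key preliminary point is that the board is the finite edge set $E(G)$ and the players claim edges alternately until none remain, so when the game ends every edge has been taken by exactly one of Toucher and Isolator. I would then classify each vertex $v$. If at least one edge incident to $v$ has been taken by Toucher, then $v$ is touched and has Danger $0$. Otherwise none of the edges at $v$ was taken by Toucher; since all edges are now claimed, every edge at $v$ must have been taken by Isolator, placing $v$ in the $k = 0$ case of Definition~\ref{Dangerdef} with Danger $2^{0} = 1$. Thus each untouched vertex contributes $1$ and each touched vertex contributes $0$, so the total Danger at the end equals exactly the number of untouched vertices.

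Since the argument is a direct translation of the definitions, the only genuinely delicate point is the bookkeeping, in particular the observation that by the end of the game an untouched vertex has \emph{all} of its incident edges claimed by Isolator rather than merely left unclaimed; this is what forces $k = 0$ and hence Danger $1$, and it relies on the game being played out until the board is exhausted. I would also note that this is consistent with the ``updated graph'' description, where an untouched vertex at the end has had all of its incident edges removed (they were taken by Isolator) and so has current degree $0$, again contributing $2^{0} = 1$ to $\sum_{v \in U(G)} 2^{-d(v)}$.
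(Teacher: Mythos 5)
Your proof is correct and is exactly the direct unpacking of Definition~\ref{Dangerdef} that the paper intends; the paper states this as an Observation without proof precisely because this is the whole argument. The one subtlety you flag --- that at the end of the game an untouched vertex has all incident edges claimed by Isolator, forcing the $k=0$ case and Danger $1$ --- is the right point to make explicit.
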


Hence, bounds for $u(G)$ can sometimes be obtained by investigating how the total Danger changes with each move.
Here, a further observation is crucial.

\begin{Observation} \label{Dangerobs2}
	Whenever Toucher takes an edge,
	the total Danger will decrease by exactly the sum of the Dangers of the two vertices incident to this edge
	(since both of these Dangers will fall to zero).
	
	Similarly, whenever Isolator takes an edge,
	the total Danger will increase by exactly the sum of the Dangers of the two vertices incident to this edge
	(since both of these Dangers will double).
\end{Observation}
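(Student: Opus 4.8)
The plan is to adopt the equivalent formulation of the total Danger stated just before the Observation: $\sum_{v \in U(G)} 2^{-d(v)}$, where $G$ is the current graph after deleting all the edges Isolator has claimed so far, $U(G)$ is the set of vertices not yet touched by Toucher, and $d(v)$ is the degree of $v$ in this current graph. The central point is that claiming any single edge $e = uv$ is a purely \emph{local} operation: no vertex other than $u$ and $v$ changes either its degree in the current graph or its membership in $U(G)$, so all the corresponding summands are unaffected and the entire change in total Danger is contributed by the two terms for $u$ and $v$.

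First I would handle the move where Toucher claims $e = uv$. Because only Isolator's edges are ever deleted, a Toucher move leaves the current graph unchanged, so $d(u)$ and $d(v)$ do not move; what changes is that $u$ and $v$ become touched and therefore leave $U(G)$. Their summands $2^{-d(u)}$ and $2^{-d(v)}$, which are precisely their Dangers immediately before the move, vanish from the sum, so the total Danger decreases by exactly the sum of the two Dangers. The one case to note is that $u$ or $v$ might already have been touched; then it already contributed $0$ and lay outside $U(G)$, so it affects neither the sum nor the claimed decrease, and the statement still holds.

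Next I would handle the move where Isolator claims $e = uv$, which deletes $e$ from the current graph and hence lowers each of $d(u)$ and $d(v)$ by $1$, while leaving the touched/untouched status of both unchanged. For an untouched endpoint $u$ the summand changes from $2^{-d(u)}$ to $2^{-(d(u)-1)} = 2 \cdot 2^{-d(u)}$, that is, its Danger doubles and the increase is exactly its old Danger $2^{-d(u)}$; summing over $u$ and $v$ gives a total increase equal to the sum of the two Dangers. The whole argument is essentially a direct check against the definitions, so there is no genuine obstacle; the only care required is to confirm that no third vertex is affected and to verify the degenerate endpoints — an already-touched endpoint, whose Danger $0$ ``doubles'' to $0$, and the boundary case in which $e$ is the last surviving edge at $u$, so that $d(u)$ falls to $0$ and the Danger rises to $1$, consistent with doubling $2^{-1}$ — so that the two-vertex accounting remains exact in every instance.
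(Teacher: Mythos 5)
Your proposal is correct and follows essentially the same route as the paper, which justifies the Observation directly from the definition of Danger (Toucher's move sends both incident Dangers to zero; Isolator's move doubles them); you simply make this explicit via the equivalent formulation $\sum_{v\in U(G)}2^{-d(v)}$ and check the degenerate cases (already-touched endpoints, last free edge at a vertex). No gaps.
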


Another standard method that will be used throughout is `partition of the board'.
Here, we divide the graph up into various segments,
we focus on one particular player,
and we try to optimise that player's strategy subject to the constraint
that he/she must always take an edge from the same segment that his/her opponent has just played in
(this then provides bounds for the overall optimum strategy,
where there are no such constraints).
The main advantage of this idea is that
it enables us to split the whole graph into simpler pieces
that can be analysed more easily.
However,
we must choose the division of the graph in a rather careful manner
in order to achieve substantial results.
%In particular, it should be noted that for vertices at the boundary of a segment,
%we will lose information about any incident edges from a neighbouring segment.
% mSt: At this point the sentence sounds rather confusing.

%Finally,
%since we are playing a game on a graph,
%we will of course sometimes utilise known results from graph theory.
%Examples here include the property that any connected graph with only vertices of even degree
%must contain an Eulerian circuit,
%and the fact that every tree with more than two vertices
%must contain two non-adjacent leaves.
% mSt: The tools in previous paragraph sounded too straightforward to mention.

The remainder of the paper is structured as follows:
in Section~\ref{gen},
we prove the general bounds applicable to all graphs,
as stated in Theorem~\ref{gen2} and Theorem~\ref{gen1};
in Section~\ref{cycles},
we focus on the case when the graph is a cycle,
proving Theorem~\ref{cyc2}
(the proof of Theorem~\ref{path1} on paths is very similar,
and so is left to the appendix);
in Section~\ref{2reg},
we generalise this to any $2$-regular graph,
obtaining Theorem~\ref{2reg2};
in Section~\ref{trees},
we investigate trees,
proving Theorem~\ref{tree};
in Section~\ref{3reg},
we derive results for $3$-regular graphs,
including Theorem~\ref{u>0example};
and in Section~\ref{discussion},
we discuss some interesting remaining questions.

\section{General bounds} \label{gen}

As mentioned,
in this section
we shall now derive general bounds applicable to any graph $G$, 
proving Theorem~\ref{gen2} and Theorem~\ref{gen1}.
In each case, 
we shall also observe that there are straightforward tight examples for infinitely many values of $|V(G)|$.

We shall start with the proof of the upper bound of Theorem~\ref{gen2},
followed by tight examples,
and then give the proof of the corresponding lower bound,
again followed by tight examples.
After this,
we shall then use the same pattern for the proof of Theorem~\ref{gen1}
(and also for future sections).

We begin with perhaps the most interesting proof of this section,
which uses a variant of the partition of the board strategy involving the pairing of edges.

\begin{proof}[Proof of upper bound of Theorem~\ref{gen2}]
	We will provide Toucher with a pairing strategy to touch enough vertices for the statement to hold. 
	To do this, we will define a collection of disjoint pairs of edges, and Toucher's strategy will be to wait (and play arbitrarily) until Isolator claims an edge within a pair, and then immediately respond by claiming the other edge. 
	This way, Toucher will certainly claim at least one edge in every pair.
	
	We start by adding an auxiliary vertex and connecting it to all odd degree vertices of $G$. This will create an even graph, and so each of its components has an Eulerian tour. For each of these Eulerian tours, we then arbitrarily choose one of two orientations. Once the auxiliary vertex is removed, we are thus left with an orientation of $G$. 
	
	Let $V_i$ be the set of vertices with degree $i$, and let $V_i^{(j)}$ be the set of vertices with degree $i$ and $j$ incoming edges. 
	We shall use $d_i^{(j)}$ to denote $\left| V_i^{(j)} \right|$, 
	and we note that $\left| V_i \right|=d_i$. 
	Also, observe that for even $i$ we have $V_i = V_i^{\left(\frac{i}{2}\right)}$, while for odd $i$ we have $V_i=V_i^{\left(\frac{i+1}{2}\right)} \cup V_i^{\left(\frac{i-1}{2}\right)}$.
	
	For each vertex that has at least two incoming edges, 
	we may choose two such edges arbitrarily and pair them. 
	Note that we can do this for all vertices in $V_3^{(2)} \cup \left(\cup_{i\geq 4} V_i \right)$. 
	
	Next, for all the vertices in $V_1^{(1)} \cup V_2 \cup V_3^{(1)}$ 
	(observe that these each have exactly one incoming edge), 
	let us collect all incoming edges and pair them up arbitrarily. 
	If $\left| V_1^{(1)} \cup V_2 \cup V_3^{(1)} \right|$ is odd,
	then there will be one unpaired edge here,
	which Toucher should claim with her very first move of the game
	(before Isolator has made any moves).
	
	Note that by treating only the incoming edges at every vertex, 
	we ensure that all our edge pairs are pairwise disjoint.
	
	Let us now consider the number of vertices that Toucher will touch following this pairing strategy. 
	She certainly touches all vertices in $V_3^{(2)} \cup \left(\cup_{i\geq 4} V_i \right)$ and half (rounded up) of the vertices in $V_1^{(1)} \cup V_2 \cup V_3^{(1)}$, so counting those that remain then gives
	\begin{equation} 
	u(G) \leq d_0 + d_1^{(0)} + \frac{d_1^{(1)}}{2} + \frac{d_2}{2} + \frac{d_3^{(1)}}{2}. \label{i:1}
	\end{equation}
	
	Finally, 
	note that if we were to use the same orientation of $G$, 
	but pair the \emph{outgoing} edges instead of the incoming edges,
	then exactly the same analysis gives
	\begin{equation} 
	u(G) \leq d_0 + d_1^{(1)} + \frac{d_1^{(0)}}{2} + \frac{d_2}{2} + \frac{d_3^{(0)}}{2}. \label{i:2}
	\end{equation}
	
	Summing (\ref{i:1}) and (\ref{i:2}) (and dividing by two) then completes the proof. 
\end{proof}

For this bound,
it is trivial to note the following tight examples.

\begin{Proposition}
	Any graph with minimum degree at least four will provide a tight example to the upper bound in Theorem~\ref{gen2}.
	\qed
\end{Proposition}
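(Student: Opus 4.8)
The plan is to observe that the hypothesis forces the upper bound of Theorem~\ref{gen2} to equal zero, and then to invoke the trivial lower bound $u(G) \ge 0$ to conclude equality. Since the proposition asserts tightness rather than a fresh bound, the entire task reduces to matching $u(G)$ to the right-hand side of Theorem~\ref{gen2} in the special case at hand.

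First I would note that if $G$ has minimum degree at least four, then $G$ has no vertex of degree $0$, $1$, $2$, or $3$, so that $d_0 = d_1 = d_2 = d_3 = 0$. Substituting into the right-hand side of Theorem~\ref{gen2} gives
\begin{displaymath}
d_0 + \frac{3}{4} d_1 + \frac{1}{2} d_2 + \frac{1}{4} d_3 = 0,
\end{displaymath}
so that $u(G) \le 0$.

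Next I would use the fact that $u(G)$ counts untouched vertices and is therefore nonnegative, giving $u(G) \ge 0$. Together these two inequalities force $u(G) = 0$, which coincides exactly with the value of the upper bound, so $G$ is indeed a tight example. There is no real obstacle here: the argument is immediate, and it simply confirms the consequence already remarked upon after the statement of Theorem~\ref{gen2}, namely that no vertices remain untouched once the minimum degree reaches four. The only thing worth being explicit about is that tightness means the \emph{bound} and the \emph{value} agree, which here happens because both equal zero rather than because of any delicate extremal construction.
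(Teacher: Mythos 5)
Your argument is correct and is exactly the one the paper intends: the paper marks this proposition with a \qed and no written proof, relying precisely on the observation that $d_0=d_1=d_2=d_3=0$ forces the bound to be $0$, which the nonnegative quantity $u(G)$ then attains. Nothing further is needed.
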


Before we move on to the proof of Theorem~\ref{gen1},
which uses a Danger function approach,
let us briefly give the proof of the lower bound of Theorem~\ref{gen2},
and then also provide corresponding tight examples.

\begin{proof}[Proof of lower bound of Theorem~\ref{gen2}]
	Let $X$ denote the set of edges whose endpoints both have degree $1$,
	and let $Y$ denote the set of edges with exactly one endpoint of degree $1$. Note that $d_{1} = |Y| + 2|X|$.
	
	By giving priority to the edges in $X$,
	followed by the edges in $Y$,
	Isolator will be guaranteed to take at least
	$\left \lfloor \frac{|X|+|Y|}{2} \right \rfloor$
	of these edges in total,
	including at least
	$\left \lfloor \frac{|X|}{2} \right \rfloor$
	of the edges in $X$.
	
	Hence,
	\begin{eqnarray*}
		u(G) & \geq & d_{0} + \left \lfloor \frac{|X|+|Y|}{2} \right \rfloor + \left \lfloor \frac{|X|}{2} \right \rfloor \\
		& \geq & d_{0} + \frac{|X|+|Y|}{2} - \frac{1}{2} + \frac{|X|}{2} - \frac{1}{2} \\
		%& = & d_{0} + |X| + \frac{|Y|}{2} - 1 \\
		%& = & d_{0} + |X| + \frac{d_{1}}{2} - |X| - 1 \\
		& = & d_{0} + \frac{d_{1}}{2} - 1.
	\end{eqnarray*}
\end{proof}

\begin{Proposition}
	Any graph consisting of an odd number of $K_{2}$ components
	will provide a tight example to the lower bound in Theorem~\ref{gen2}.
	\qed
\end{Proposition}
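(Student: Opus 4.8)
The plan is to confirm that equality holds in the lower bound of Theorem~\ref{gen2} for any graph $G$ that is a disjoint union of $m$ copies of $K_2$ with $m$ odd. First I would record the relevant parameters: every one of the $2m$ vertices has degree exactly one, so $d_0 = 0$, $d_1 = 2m$, and all higher $d_i$ vanish. The lower bound of Theorem~\ref{gen2} then reads $u(G) \ge d_0 + \tfrac{1}{2}d_1 - 1 = m - 1$, so it suffices to establish the matching upper bound $u(G) \le m - 1$, i.e.\ to exhibit a Toucher strategy guaranteeing that at least $m+1$ vertices are touched.

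The key structural observation is that the $m$ edges are pairwise disjoint, so the game decouples completely: whenever Toucher claims any still-free edge she touches its two endpoints, and because those endpoints lie in no other edge this gain is permanent and independent of every other move. Toucher's strategy is therefore the simplest possible---on each of her turns she claims an arbitrary unclaimed edge. Since Toucher moves first and there are $m$ edges in total, over the course of the game she secures $\lceil m/2 \rceil = \tfrac{m+1}{2}$ of them (this is where the hypothesis that $m$ is odd is used), thereby touching $2 \cdot \tfrac{m+1}{2} = m+1$ distinct vertices. Consequently at most $2m - (m+1) = m - 1$ vertices remain untouched, giving $u(G) \le m - 1$.

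Combining the two bounds yields $u(G) = m - 1 = d_0 + \tfrac{1}{2} d_1 - 1$, so the lower bound is attained exactly. There is no real obstacle here: the disjointness of the components removes all strategic interaction, and the only point requiring care is the turn count. It is precisely the oddness of $m$ that lets Toucher's first-move advantage translate into claiming a strict majority of the edges; for even $m$ the two players would split the edges evenly and $m$ vertices would be left untouched, so the construction would fail to be tight. This explains why the hypothesis is stated for an odd number of components.
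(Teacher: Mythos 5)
Your proof is correct and is exactly the routine counting argument the paper leaves implicit (the proposition is stated with no proof): Toucher claims $\lceil m/2\rceil=\tfrac{m+1}{2}$ of the $m$ pairwise disjoint edges and hence touches $m+1$ vertices, matching the theorem's lower bound of $m-1$ untouched vertices. Your remark on why oddness of $m$ is needed is also accurate.
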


We may now proceed with the proof of Theorem~\ref{gen1},
again starting with the upper bound.
As mentioned,
this bound is obtained via an analysis of the Danger function.

\begin{proof}[Proof of upper bound of Theorem~\ref{gen1}]
	Our proof is an extension of that of the acclaimed Erd\H{o}s-Selfridge Theorem~\cite{erdself},
	which establishes conditions under which a player can obtain a `winning set' of edges.
	In the context of our game,
	we can consider Isolator as having obtained such a winning set
	if he has claimed all of the edges incident to a vertex
	(thus isolating it).
	
	However,
	for our purposes,
	it is crucial that we now also find a way to keep track	of the number of these winning sets.
	Here,
	the Danger function will play a vital role,
	with the key observation being that 
	the total Danger at the end of the game is precisely the number of untouched vertices
	(see Observation~\ref{Dangerobs1}).
	
	Let us begin by recalling (from Observation~\ref{Dangerobs2})
	that whenever Toucher takes an edge,
	the Dangers of the two incident vertices will both fall to zero,
	and	that the total Danger will hence decrease by their sum.
	By contrast,	
	whenever Isolator takes an edge,
	the total Danger will increase by exactly the sum of the Dangers of the two incident vertices.
	
	Hence,
	let us consider the strategy where Toucher always chooses the edge
	which maximises the sum of the Dangers of the two vertices incident to it.
	By this maximality condition
	(and Observation~\ref{Dangerobs2}),
	it then follows that in each pair of moves
	(one from Toucher and then one from Isolator),
	the total Danger can never increase.
		
	Note furthermore that if $|E(G)|$ is odd,
	then the game will end with one final (unpaired) move by Toucher,
	which also cannot increase the total Danger.
	
	Thus, recalling from Observation~\ref{Dangerobs1}
	that the total Danger at the start of the game is
	$\sum_{v} 2^{-d(v)}$, 
	and again remembering that the total Danger at the end of the game is exactly the number of untouched vertices,
	we hence obtain the desired bound.
\end{proof}

Again,
it is simple to identify tight examples.

\begin{Proposition}
Any graph consisting of an even number of $K_{2}$ components
will provide a tight example to the upper bound in Theorem~\ref{gen1}.
\qed
\end{Proposition}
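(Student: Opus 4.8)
The plan is to observe that such a graph is the simplest possible instance of the game, one in which play decomposes completely into independent single-edge subgames. First I would fix notation: suppose $G$ consists of $m$ disjoint copies of $K_2$ with $m$ even, so that $n = |V(G)| = 2m$ and every vertex has degree $1$. Evaluating the right-hand side of the upper bound in Theorem~\ref{gen1} then gives $\sum_{v} 2^{-d(v)} = 2m \cdot 2^{-1} = m$, so it remains only to verify that $u(G) = m$.

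Next I would analyse optimal play directly, without recourse to the Danger function. Since each component is a single edge and the components are pairwise disjoint, every move of the game consists of claiming one entire component, and the fate of that component's two vertices is decided solely by which player claims its edge: if Toucher claims it then both endpoints are touched (contributing $0$ untouched vertices), whereas if Isolator claims it then both endpoints remain untouched (contributing $2$). Consequently the total number of untouched vertices at the end of the game equals exactly twice the number of edges claimed by Isolator.

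The remaining step is a parity count. As Toucher moves first and $|E(G)| = m$ is even, the $m$ moves split evenly, so Isolator claims precisely $m/2$ edges regardless of how either player plays. Because all components are isomorphic and mutually independent, there is no strategic interaction between them, and hence every line of play is optimal: Isolator always isolates exactly $m/2$ of the edges, yielding $u(G) = 2 \cdot (m/2) = m$, which coincides with the upper bound and so establishes tightness.

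I do not anticipate any genuine obstacle here, since decomposing the board into identical one-edge games strips away all strategic content; the only point requiring care is the parity of $m$. Indeed, the evenness of $m$ is exactly what guarantees that Isolator secures half of the edges rather than one fewer, which is what makes the bound sharp, whereas for an odd number of components the extra move available to Toucher would reduce the count to $m-1$.
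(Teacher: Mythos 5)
Your proof is correct and is exactly the routine verification the paper omits (the proposition is stated with a \qed and no written proof): each $K_{2}$ component's two vertices are untouched precisely when Isolator claims its edge, and with an even number $m$ of edges Isolator is guaranteed exactly $m/2$ of them, giving $u(G)=m=\sum_{v}2^{-d(v)}$. No issues.
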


\begin{Remark}
	Note that the upper bound of Theorem~\ref{gen1} 
	will be better than the upper bound of Theorem~\ref{gen2} if
	\begin{displaymath}
	\sum_{i \ge 4} 2^{3-i} d_i < 2d_1 + 2d_2 + d_3.
	\end{displaymath}
\end{Remark}

\begin{Remark}
	In some cases,
	it is possible to combine the Danger function technique
	used to prove the upper bound of Theorem~\ref{gen1} with the pairing approach used to prove the upper bound of Theorem~\ref{gen2}. 
	In particular,
	if our graph $G$ contains an induced subgraph $H$ with $\delta(H)\geq 4$, then Toucher could employ a pairing strategy on $E(H)$ to make sure that all vertices in $V(H)$ are touched, 
	while still having full liberty when playing on the edges of $E(G) \setminus E(H)$, with the aim of maximising the number of touched vertices. 
	This would automatically improve the upper bound of Theorem~\ref{gen1}
	from $\sum_{v \in V(G)} 2^{-d(v)}$
	to $\sum_{v \in V(G) \setminus V(H)} 2^{-d(v)}$. 
	Note furthermore that such a graph $H$ must exist as soon as $|E(G)|\geq 3|V(G)|$ (see~\cite{erd64}).
	
	A similar approach to improving the upper bound is to repeatedly look for individual vertices that can be taken care of by a pair of edges. 
	In particular, if there is a vertex $v$ with neighbours $u_1$ and $u_2$ such that $d(v) < d(u_1) -1$ and $d(v) < d(u_2)$, then we could pair the edges $vu_1$ and $vu_2$ (thus taking care of touching $v$), 
	and then use the Danger function technique on the edge set 
	$E(G) \setminus \{ vu_1, vu_2 \}$.
	Thus,
	the total Danger at the start of the game
	(and hence our upper bound for $u(G)$ in Theorem~\ref{gen1})
	would decrease by $2^{-d(v)}-\left(2^{-d(u_1)}+2^{-d(u_2)}\right)$.
\end{Remark}

Our final bound of this section
is obtained by a proof which again uses an adaptation of the Danger function approach 
of Erd\H{o}s and Selfridge~\cite{erdself} and Beck~\cite{beck82}.
Our arguments here will mirror those for the upper bound,
looking at the total Danger from the point of view of Isolator instead of Toucher.

\begin{proof}[Proof of lower bound of Theorem~\ref{gen1}]	
	Note, by Observation~\ref{Dangerobs2}, that the total Danger will decrease by at most $1$ with Toucher's first move,
	since the two vertices incident to the chosen edge can only have had Danger at most $\frac{1}{2}$ each.
	
	Suppose that Isolator then chooses the edge that maximises the sum of the Dangers of the two vertices incident to it.
	Let us suppose that this sum is $r$, say, and hence that Isolator's move causes the total Danger to increase by $r$.
	
	If Toucher's response is to take an edge that is disjoint to Isolator's choice, 
	then the total Danger will decrease back by at most $r$, by the maximality condition.
	
	If Toucher's edge instead shares a common vertex with Isolator's edge, then the total Danger will still only decrease by at most $r + \frac{1}{4}$,
	since the Danger of this common vertex can only have increased by at most $\frac{1}{4}$ (from $\frac{1}{4}$ to $\frac{1}{2}$)
	as a result of Isolator's move. 
	
	Hence, in this pair of moves, one from Isolator and then one from Toucher, the total Danger can only have decreased by at most $\frac{1}{4}$ altogether. 
	
	We can consider the $|E(G)|$ moves of the game as
	Toucher's first move
	(which we have seen decreases the total Danger by at most $1$),
	followed by $\left \lfloor \frac{|E(G)|-1}{2} \right \rfloor$ subsequent pairs of moves
	(which we have seen each decrease the total Danger by at most $\frac{1}{4}$
	if Isolator always uses the given strategy),
	followed possibly (if $|E(G)|$ is even)
	by one final move from Isolator
	(which cannot decrease the total Danger).
	
	Thus, if Isolator uses the given strategy,
	then the total Danger at the end of the game 
	(and hence the number of untouched vertices,
	by Observation~\ref{Dangerobs1}) 
	will be at least 
	\begin{displaymath}
	\sum_{v \in V(G)} 2^{-d(v)} - 1 - \frac{1}{4} \left \lfloor \frac{|E(G)|-1}{2} \right \rfloor. \qedhere
	\end{displaymath}
\end{proof}

\begin{Proposition}
	Any graph consisting of $P_{3}$ components plus exactly one $P_{2}$ component
	will provide a tight example to the lower bound in Theorem~\ref{gen1}.
\end{Proposition}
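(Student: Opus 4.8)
The plan is to show that for such a graph $G$ the value $u(G)$ coincides \emph{exactly} with the lower bound of Theorem~\ref{gen1}, by pairing that bound (which already gives the inequality $u(G) \ge \cdots$ for free) with an explicit Toucher strategy that forces the reverse inequality. Write $G$ as the disjoint union of $p$ copies of $P_3$ together with one copy of $P_2$, so that $n = 3p+2$ and $|E(G)| = 2p+1$. First I would compute the starting total Danger: each $P_3$ has two degree-$1$ vertices and one degree-$2$ vertex, contributing $2\cdot 2^{-1} + 2^{-2} = \tfrac54$, while the $P_2$ contributes $2\cdot 2^{-1} = 1$, so $\sum_{v} 2^{-d(v)} = \tfrac54 p + 1$. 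Substituting into the lower bound of Theorem~\ref{gen1} gives $\tfrac54 p + 1 - \tfrac{(2p+1)+7}{8} = \tfrac54 p + 1 - \tfrac{p+4}{4} = p$, so that $u(G) \ge p$. Since $|E(G)|$ is odd the bound lands on an integer, which is encouraging for tightness.

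It then remains to exhibit a Toucher strategy guaranteeing $u(G) \le p$. Since $|E(G)| = 2p+1$ is odd, Toucher moves both first and last. My plan is for Toucher to open by claiming the unique edge of the $P_2$ component, immediately touching both of its vertices. This leaves exactly $2p$ edges, all lying in the $p$ copies of $P_3$, with Isolator to move. From here Toucher adopts the pairing strategy of the proof of Theorem~\ref{gen2}: pair the two edges of each $P_3$, and whenever Isolator claims an edge inside some $P_3$, respond at once by taking the other edge of that same $P_3$.

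I would then verify the turn bookkeeping that makes this pairing legitimate: after Toucher's opening move the number of remaining edges is even and it is Isolator's turn, and at each of Isolator's moves every copy of $P_3$ is either completely free or already exhausted (any $P_3$ that Isolator has previously entered was emptied by Toucher's paired response). Hence each of Isolator's $p$ moves lands in a fully free $P_3$, and the partner edge is always available for Toucher. Consequently Toucher claims at least one edge of every $P_3$, touching its central degree-$2$ vertex together with one endpoint, so at most one vertex (the remaining endpoint) stays untouched in each $P_3$, and none stays untouched in the $P_2$. This gives $u(G) \le p$, and combined with $u(G) \ge p$ we conclude $u(G) = p$, so the lower bound of Theorem~\ref{gen1} is attained with equality.

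I do not expect a genuine obstacle here; the one point requiring care is precisely the parity argument of the previous paragraph, namely that spending Toucher's very first move on the $P_2$ edge is what leaves an even number of edges with Isolator on the move, thereby allowing each of Isolator's subsequent moves to be answered inside its own pair and preventing any $P_3$ from being presented to Isolator in a half-claimed state.
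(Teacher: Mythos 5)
Your proposal is correct and follows essentially the same route as the paper: the same computation showing the lower bound evaluates to $p$ (the paper's $x$), and the same Toucher strategy of opening on the $P_2$ edge and then answering Isolator inside whichever $P_3$ he enters. Your extra care with the parity/bookkeeping of the pairing is a harmless elaboration of what the paper states more tersely.
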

\begin{proof}
	Let $x$ denote the number of $P_{3}$ components in such a graph,
	and note that we then have
	$|E(G)|=2x+1$,
	$d_{1}=2x+2$,
	$d_{2}=x$,
	and $d_{i}=0$ for $i>2$.
	
	Hence,
	\[
	\sum_{v \in V(G)} 2^{-d(v)} - \frac{|E(G)|+7}{8} =  \frac{1}{2} (2x+2) + \frac{1}{4} x - \frac{(2x+1)+7}{8} = x.
	\]
	
	Toucher can ensure that the number of untouched vertices is only $x$
	by taking the only edge in the $P_{2}$ component in her first move, and then always immediately taking the remaining edge from any $P_{3}$ component on which Isolator plays.
\end{proof}

\begin{Remark} \label{genremark}
Note that the lower bound of Theorem~\ref{gen1}
will be better than the lower bound of Theorem~\ref{gen2} if
\begin{displaymath}
|E(G)| < 1 + \sum_{i \ge 2} 2^{3-i} d_i.
\end{displaymath}

As $2|E(G)|=\sum_{i \ge 1} id_{i}$,
this will occur if $d_{2}$ is sufficiently large
(e.g.~consider a path or a cycle,
in which case the lower bound of Theorem~\ref{gen2} is ineffective).
\end{Remark}

\begin{Remark} \label{genremark2}
Although the bounds given in this section involve only the degrees of the vertices,
we shall see examples later
(in Remark~\ref{degseqremark} and Section~\ref{3reg})
which show that $u(G)$ is not determined by the degree sequence alone.
It would be interesting to know of any particular properties or parameters of the graph that can tighten the interval in which $u(G)$ must be located. 
\end{Remark}

\section{Cycles} \label{cycles}

In this section, 
we consider the specific case when our graph is a cycle.
We shall start by applying Theorem~\ref{gen1}
to immediately obtain the upper bound in Theorem~\ref{cyc2},
and then the majority of this section will be devoted to deriving the lower bound.

\begin{proof}[Proof of upper bound of Theorem~\ref{cyc2}]
	This follows from Theorem~\ref{gen1}.
\end{proof}

\begin{Proposition}
The graph $C_{4}$ provides a tight example to the upper bound in Theorem~\ref{cyc2}.
\end{Proposition}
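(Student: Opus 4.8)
The goal is to show that $C_4$ achieves the upper bound of Theorem~\ref{cyc2} exactly, namely $u(C_4) = \frac{n}{4} = 1$. Since the upper bound gives $u(C_4) \le 1$, the plan is to exhibit a concrete strategy for Isolator guaranteeing that at least one vertex remains untouched, which forces $u(C_4) \ge 1$ and hence equality.

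First I would set up the game explicitly. Label the four vertices of $C_4$ as $v_1, v_2, v_3, v_4$ in cyclic order, with edges $e_1 = v_1v_2$, $e_2 = v_2v_3$, $e_3 = v_3v_4$, $e_4 = v_4v_1$. The game has exactly four edges, so Toucher and Isolator each claim exactly two edges, with Toucher moving first. Isolator wants to guarantee that some vertex has \emph{both} of its incident edges claimed by Isolator (recall each vertex in $C_4$ has degree $2$). Equivalently, Isolator wins a vertex $v_i$ if he claims both edges meeting at $v_i$.

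The main obstacle is that Toucher moves first, so Isolator must react rather than dictate. The natural strategy is a pairing-type response: after Toucher's first move, Isolator has a free choice, and I would argue that Isolator can always arrange his two edges to share a common vertex that Toucher has not already protected. Concretely, whatever edge Toucher takes first, two of the remaining three edges are disjoint from it; Isolator should claim one of these, say an edge incident to a vertex $v$ whose other incident edge is still unclaimed. Then on Isolator's second move I would show the other edge at $v$ is still available (Toucher has made at most one further move, which cannot have blocked both of Isolator's needed edges, given the small case analysis). Because there are only $\binom{4}{1} = 4$ choices for Toucher's opening and the board is so small, I would simply verify by a short case check that Isolator can always complete a pair of edges around some common vertex, thereby isolating it.

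Combining the two directions, the upper bound $u(C_4) \le \frac{4}{4} = 1$ from Theorem~\ref{cyc2} (applied as in the preceding proof) together with the strategy showing $u(C_4) \ge 1$ yields $u(C_4) = 1$, so $C_4$ meets the bound with equality. I expect the entire argument to reduce to this finite case analysis, so the only real care needed is to present the case check cleanly — ensuring that in each opening line Isolator's two claimed edges genuinely surround a single untouched vertex.
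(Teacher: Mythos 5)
Your overall plan is exactly the paper's: the upper bound $u(C_4)\le 1$ comes from Theorem~\ref{cyc2} (via Theorem~\ref{gen1}), and the matching lower bound comes from an explicit Isolator strategy, which in the paper is simply ``take the edge opposite to Toucher's first edge, then either of the two remaining edges.'' However, your written justification contains a factual error that matters: you claim that after Toucher's first move ``two of the remaining three edges are disjoint from it.'' In $C_4$ each edge meets exactly two of the other three edges, so only \emph{one} remaining edge (the opposite one) is disjoint from Toucher's choice. This is not merely cosmetic: if Isolator instead claims an edge adjacent to Toucher's (say Toucher has $v_1v_2$ and Isolator takes $v_2v_3$), then Toucher replies with $v_3v_4$ and every vertex is touched, so that line loses. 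The reason the opposite edge works is that both of its endpoints remain candidates for isolation, and Toucher's single reply can block at most one of them --- this is the content your phrase ``cannot have blocked both of Isolator's needed edges'' is groping for, but it only holds for the opposite edge. Since you defer to a finite case check you would presumably catch this, but as stated the strategy is underdetermined and admits a losing move; fix it by specifying that Isolator must take the unique opposite edge.
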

\begin{proof}
After Toucher's first move, Isolator can take the opposite edge (and then either of the two remaining edges with his second move).
\end{proof}

We may now start to work towards the lower bound in Theorem~\ref{cyc2}.
Note that Theorem~\ref{gen1} immediately provides a lower bound of around $\frac{n}{8}$, but we shall aim to do significantly better.
The key result here is Lemma~\ref{3of16lemma}, which will enable Isolator to guarantee three untouched vertices from every sixteen edges.

\begin{Lemma} \label{3of16lemma}
Isolator can guarantee that the number of untouched internal vertices in $P_{17}$ will be at least three.
\end{Lemma}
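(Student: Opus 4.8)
The plan is to recast the lemma as a Maker-type goal for Isolator and then hand him an \emph{active} strategy: unlike Toucher's upper-bound argument, a pairing strategy is the wrong tool here, since Isolator is the player trying to \emph{complete} structures rather than to hit every one of them. Write the edges of $P_{17}$ in order as $e_1,\dots,e_{16}$, so that the internal vertices are $v_2,\dots,v_{16}$ and vertex $v_i$ is incident exactly to $e_{i-1}$ and $e_i$. Then $v_i$ is untouched precisely when Isolator owns both $e_{i-1}$ and $e_i$. Since Isolator claims exactly eight of the sixteen edges, the number of untouched internal vertices equals the number of adjacent pairs inside Isolator's edge set, i.e.\ $8-r$, where $r$ is the number of maximal runs (blocks of consecutive edges) that Isolator's eight edges form along the path. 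Hence it suffices to show that Isolator can force $r\le 5$, equivalently that he can complete at least three of the overlapping pairs $\{e_i,e_{i+1}\}$.

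The engine of the strategy is the \emph{fork}. Suppose that on his turn Isolator already owns an edge $e_i$ (with $2\le i\le 15$) whose two neighbouring edges $e_{i-1}$ and $e_{i+1}$ are both still unclaimed. Then he simultaneously threatens to isolate $v_i$ (by taking $e_{i-1}$) and to isolate $v_{i+1}$ (by taking $e_{i+1}$); Toucher can block only one of these on his reply, so Isolator secures one untouched vertex on the following move. The crucial point is that such forks \emph{chain}: after Isolator completes, say, $\{e_i,e_{i+1}\}$, his freshly claimed edge $e_{i+1}$ may itself have an unclaimed neighbour $e_{i+2}$ and thereby carry a new threat further along the path. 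Isolator's strategy is therefore to keep manufacturing forks, always growing his runs into still-free territory, so that Toucher's parries do not kill a threat but merely displace it.

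Concretely, I would attach to each position a potential counting the vertices Isolator has already secured together with the threats he can still force, and show that this potential reaches at least three by the end of the $16$ moves (equivalently, track $r$ directly and argue that Toucher can never drive it above $5$). The main obstacle, and the reason most of the work lives here, is that a single Toucher move can both parry a fork and fragment Isolator's edges, and that the guarantee does \emph{not} localise: naively splitting $P_{17}$ into two halves of eight edges and playing them independently gives Isolator too little, because the threats straddling the cut are wasted, whereas on the whole path those same threats are reused. The bookkeeping must therefore be carried out globally on all sixteen edges, and the delicate step is to verify, against every fragmenting line of Toucher, that Isolator can keep enough independent fork-chains alive to secure three vertices. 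I expect this to require a structured case analysis on Toucher's replies (or, more slickly, a carefully weighted potential function) rather than any short local argument.
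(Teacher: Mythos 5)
Your reformulation is correct: an internal vertex $v_i$ is untouched exactly when Isolator owns both $e_{i-1}$ and $e_i$, so the number of untouched internal vertices is $8-r$ with $r$ the number of maximal runs in Isolator's edge set, and the ``fork'' mechanism (claim an edge with two free neighbours, Toucher can only block one side) is sound --- it is precisely Claim~\ref{cl1} of the paper in disguise. But the proposal stops exactly where the proof has to begin. You never define the potential function you invoke, never specify which edge Isolator actually plays in a given position, and never carry out the case analysis you yourself identify as ``the delicate step.'' The assertion that Toucher can never force $r\ge 6$ is the entire content of the lemma, and it is left as a promise. In particular, the danger you correctly flag --- that one Toucher move can simultaneously parry a fork and fragment the remaining free territory --- is the exact difficulty the argument must overcome, and nothing in the proposal overcomes it. A greedy ``keep chaining forks'' strategy costs Isolator two moves per secured vertex once Toucher caps a run on both sides, and Toucher's first-move tempo plus her ability to pre-place separating edges makes the count genuinely tight; without an explicit invariant this cannot be waved through.

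It is also worth noting that your stated reason for rejecting a decomposition approach is misplaced. The paper's proof \emph{is} a decomposition argument: after Toucher's first move the thirteen-plus free edges are split into consecutive segments of sizes $5$, $5$ (or $6$), and $3$, Isolator takes the initiative in the first segment and otherwise responds in the segment where Toucher just played, and the leakage you worry about (Toucher moves straddling or escaping a segment) is handled by an explicit bookkeeping of how many of Toucher's moves fall outside the current segment (cases (b) and (c) of Claim~\ref{cl2}). The lesson is not that the analysis must be global, but that the segments must be sized so that Isolator keeps the move advantage in each, and that the ``wasted'' Toucher moves must be charged against the later segments. If you want to salvage your approach, the most direct route is to prove local statements of the form of Claims~\ref{cl1} and~\ref{cl2} for free segments of lengths $3$ and $5$ and then run the four-way case analysis on the location of Toucher's first edge; as written, your argument does not yet establish the lemma.
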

\begin{proof}
The proof will consist of dividing up the sixteen edges of $P_{17}$ into various segments.
Consequently, it will be useful to first establish two statements concerning segments of length three and five, respectively.

\begin{Claim}\label{cl1}
If it is Isolator's move and there is a segment consisting of three consecutive free edges, then he can isolate an internal vertex from this segment.
\end{Claim}
\begin{proof}
Let the edges of this segment be denoted by $e_a, e_b, e_c$. Isolator claims the edge $e_b$. In the following move, Toucher cannot claim both $e_a$ and $e_c$, 
so one of them is free for Isolator to claim in his following move. 
Hence,
he can isolate one internal vertex.
\end{proof}

\begin{Claim}\label{cl2}
If it is Isolator's move
and there is a segment consisting of five consecutive free edges
$e_a, e_b, e_c, e_d, e_f$,
then he can guarantee that at least one of the following will occur:
\begin{itemize}
\item [(a)] after Isolator and Toucher have each had two moves,
one internal vertex from this segment will now be isolated
and neither of Toucher's moves will have taken place outside this segment;
\item [(b)] after Isolator and Toucher have each had three moves,
two internal vertices from this segment will now be isolated
and exactly one of Toucher's moves will have taken place outside this segment;
\item [(c)] after Isolator and Toucher have each had three moves,
two internal vertices from this segment will now be isolated,
exactly two of Toucher's moves will have taken place outside this segment,
and neither $e_{a}$ nor $e_{f}$ will have been claimed by Toucher;
\item [(d)] after Isolator has had four moves and Toucher has had three moves,
three internal vertices from this segment will now be isolated.
\end{itemize}
\end{Claim}
\begin{proof}
Let Isolator claim the central edge $e_{c}$ with his first move.
After this,
let Isolator then use the strategy of trying to extend this edge
into a string of consecutive edges
(working solely within this segment),
by always choosing an edge immediately adjacent to his current string
until this is no longer possible.

At this point,
it must then be the case that the `left-most' edge of Isolator's string
is either $e_{a}$ or is adjacent to an edge of Toucher,
and similarly the `right-most' edge of Isolator's string
is either $e_{f}$ or is adjacent to an edge of Toucher.
Note also that the string must certainly contain at least two edges.

If Isolator's string contains exactly two edges,
then these must either be $e_{b}$ and $e_{c}$
or $e_{c}$ and $e_{d}$,
and it must be that Toucher has claimed the edges
either side of this string with her two moves.
Hence, we have (a).

If Isolator's string contains exactly three edges,
then observe that Toucher must have claimed at least one of the other two edges in this segment.
If (at the end of Toucher's third move)
Toucher has in fact claimed both of these other two edges,
then we have (b).
If (at the end of Toucher's third move)
Toucher has only claimed one of these other two edges,
then it can only be that this edge is $e_{b}$
(and the string is $e_{c},e_{d},e_{f}$)
or $e_{d}$
(and the string is $e_{a},e_{b},e_{c}$),
so we have (c).

Finally,
if Isolator's string contains at least four edges,
then we have (d).
\end{proof}

We may now prove the lemma.
We shall denote the sixteen edges of $P_{17}$ by
$\{e_1, e_2, \dots, e_{16}\}$,
and wlog
we may suppose that Toucher claims one of the edges
in $\{e_1, e_2, \dots, e_8\}$ with her first move.
We differentiate between the following cases.

\paragraph{Case 1:} Toucher claimed one of the edges $\{e_1, e_2, e_3\}$. 

Isolator splits the free edges $\{e_4,e_5,\dots, e_{16}\}$ into three sequences of consecutive edges  $S_1=\{e_4,e_5,\dots,e_8\}$, $S_2=\{e_9,e_{10},\dots, e_{13}\}$, and $S_3=\{e_{14},e_{15},e_{16}\}$. Isolator plays first in $S_1$.

If Claim~\ref{cl2} (a) is true for $S_{1}$,
then Isolator isolates one internal vertex in $S_{1}$,
and the edges in $S_2$ and $S_3$ are all still free.
So Isolator then plays in $S_2$.
By Claim~\ref{cl2}, either he can isolate two more internal vertices  there and is done, or he isolates one internal vertex in $S_2$ and  all edges in $S_3$ are still free, 
so by Claim~\ref{cl1} he can also isolate one internal vertex in $S_3$.

If Claim~\ref{cl2} (b) or (c) are true for $S_1$,
then Isolator can isolate two internal vertices in $S_{1}$,
and at most two of the edges in
$\{e_8\}\cup S_2 \cup S_3$
can have been claimed by Toucher.
Hence,
there must exist a segment of three consecutive edges
not claimed by Toucher
among the nine edges in
$\{e_8\}\cup S_2 \cup S_3 = \{e_{8},e_{9},\ldots,e_{16}\}$,
in which case Isolator can isolate another internal vertex there
(by applying Claim~\ref{cl1}).

If Claim~\ref{cl2} (d) is true for $S_{1}$,
then Isolator can isolate three internal vertices in $S_{1}$. 

\paragraph{Case 2:} Toucher claimed one of the edges in $\{e_4, e_5\}$.

Isolator splits the free edges $\{e_1,e_2, e_3, e_6, e_7, \dots, e_{16}\}$ into three sequences of consecutive edges
$S_1=\{e_6,e_7, \dots, e_{10}\}$, $S_2=\{e_{11},e_{12}, \dots, e_{16}\}$,
and $S_3=\{e_1,e_2,e_3\}$
(note that this time $|S_{2}|=6$,
but $S_{2}$ and $S_{3}$ are no longer adjacent).
Isolator again plays first in $S_1$.

If Claim~\ref{cl2} (a) or (d) are true for $S_{1}$,
then the proof is exactly the same as with Case~$1$.

If Claim~\ref{cl2} (b) or (c) hold for $S_{1}$,
then Isolator can isolate two internal vertices in $S_{1}$,
and at most two of the edges in $S_{2} \cup S_{3}$
can have been claimed by Toucher.
Since $|S_{2}|=6$ and $|S_{3}|=3$,
there must then exist a segment of three consecutive free edges in either $S_{2}$ or $S_{3}$,
in which case
we can apply Claim~\ref{cl1} and Isolator is done. 

\paragraph{Case 3:} Toucher claimed the edge $e_6$.

Isolator splits the free edges into three sequences of consecutive edges $S_1=\{e_1, e_2, \ldots, e_5\}$,
$S_2=\{e_{11}, e_{12}, \ldots , e_{16}\}$,
and $S_3=\{e_7, e_8, e_9, e_{10}\}$.
Note that we again have
$|S_{1}|=5$, $|S_{2}|=6$, and $|S_{3}| \geq 3$,
so we may apply exactly the same proof as with Case~$2$. 

\paragraph{Case 4:} Toucher claimed one of the edges in $\{e_7,e_8\}$.

Isolator splits the free edges into three sequences of consecutive edges
$S_1=\{e_9, e_{10}, \dots, e_{13}\}$
$S_2=\{e_1, e_2, \dots, e_6\}$,
and $S_3=\{e_{14}, e_{15}, e_{16}\}$.
Again, we have
$|S_{1}|=5$, $|S_{2}|=6$, and $|S_{3}| \geq 3$,
so we may again apply exactly the same proof as with Cases~$2$ and~$3$.
\end{proof}

We may now prove the lower bound from Theorem~\ref{cyc2}.
Since Lemma~\ref{3of16lemma} already guarantees three untouched vertices for every sixteen edges,
the main substance of the proof
is to deal satisfactorily with the `leftover' edges
when $n$ is not exactly divisible by $16$.

\begin{proof}[Proof of lower bound of Theorem~\ref{cyc2}]
After Toucher has made her first move,
let Isolator then partition the $n$ edges
into segments of $16$ consecutive edges,
together with one `leftover' segment of $1$ to $16$ consecutive edges,
so that the edge claimed by Toucher is the last edge in the leftover segment.

Let Isolator then use the strategy of always responding
in the same segment
in which Toucher played her previous move.
By Lemma~\ref{3of16lemma},
Isolator can thus guarantee that the number of untouched internal vertices
in each $16$-edge segment
will be at least three.

Hence,
if we use $k$ to denote the number of edges in the leftover segment,
it suffices to show that Isolator
can also guarantee isolating at least
$\frac{3}{16}(k-3)$ internal vertices here.

If $k \leq 3$,
then there is nothing to prove.

If $k \in \{4,5,6,7,8\}$,
then we need to show that Isolator can isolate at least one internal vertex.
Since Toucher's edge is the last one in this segment,
there exist at least three consecutive free edges,
so we may simply apply Claim~\ref{cl1}.

If $k \in \{9,10,11,12,13\}$,
then we need to show that Isolator can isolate at least two internal vertices.
Since Toucher's edge is the last one in this segment,
there exist at least eight consecutive free edges,
so we may split these into two sequences of consecutive edges $S_{1}$ and $S_{2}$
with $|S_{1}|=5$ and $|S_{2}|=3$.
Isolator then plays first in $S_{1}$.
By Claim~\ref{cl2},
either he can isolate two internal vertices in $S_{1}$ and is done,
or he isolates one internal vertex in $S_{1}$
and all edges in $S_{2}$ are still free,
so by Claim~\ref{cl1}
he can then also isolate one internal vertex in $S_{2}$.

If $k \in \{14,15,16\}$,
then we need to show that Isolator can isolate at least three internal vertices.
To achieve this,
we may split the thirteen consecutive free edges into three adjacent sequences of consecutive edges
$S_{1}$, $S_{2}$, and $S_{3}$
with $|S_{1}|=|S_{2}|=5$ and $|S_{3}|=3$,
and argue exactly as in Case~$1$ of Lemma~\ref{3of16lemma}.
\end{proof}

\begin{Proposition}
The graph $C_{3}$ provides a tight example to the lower bound in Theorem~\ref{cyc2}.
\qed
\end{Proposition}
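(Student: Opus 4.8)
The plan is first to evaluate what the lower bound of Theorem~\ref{cyc2} actually asserts for $n=3$. Substituting $n=3$ gives $\frac{3}{16}(n-3) = \frac{3}{16} \cdot 0 = 0$, so the bound merely claims $u(C_3) \geq 0$, which is vacuously true. To show tightness, therefore, I would need to establish that equality holds, i.e. that $u(C_3) = 0$. Since $u(C_3)$ counts untouched vertices and is manifestly nonnegative, it suffices to exhibit a strategy for Toucher that touches \emph{all} three vertices of the triangle, forcing $u(C_3) \leq 0$.

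Next I would describe Toucher's strategy explicitly. Label the vertices $a,b,c$ and the edges $ab, bc, ca$, so that $C_3$ has three edges and the game lasts exactly three moves (Toucher, Isolator, Toucher). On her first move, Toucher claims an arbitrary edge, say $ab$; this immediately touches the two vertices $a$ and $b$, leaving only $c$ potentially untouched. The key structural observation is that the two remaining free edges, $bc$ and $ca$, are \emph{both} incident to $c$.

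I would then complete the argument by noting that after Isolator's single move (which can claim at most one of $bc, ca$), at least one edge incident to $c$ remains free, and Toucher claims it on her final move, thereby touching $c$ as well. Hence every vertex is touched and $u(C_3) = 0$, matching the lower bound of Theorem~\ref{cyc2} exactly.

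There is no real obstacle here: the entire content is the trivial counting observation that in a triangle the two edges left after one move share the single remaining vertex, so Toucher cannot be denied it. This is why the statement is flagged with \qed in the excerpt and requires only the short verification above rather than any appeal to the machinery of Lemma~\ref{3of16lemma} or the Danger function.
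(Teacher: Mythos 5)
Your proposal is correct and matches what the paper intends: the paper marks this proposition with \qed precisely because the verification is the trivial one you give, namely that the bound reads $u(C_3)\geq 0$ at $n=3$ and Toucher's first edge leaves both remaining edges incident to the single untouched vertex, so she claims one of them on her final move and achieves $u(C_3)=0$. Nothing is missing.
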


In the appendix,
we use similar arguments to this section to prove Theorem~\ref{path1} on paths.

\section{$\mathbf{2}$-regular graphs} \label{2reg}

In this section, we now generalise our playing board from a cycle to a collection of disjoint cycles,
i.e.~any $2$-regular graph.
Again,
we shall start by applying Theorem~\ref{gen1}
to immediately obtain the upper bound in Theorem~\ref{2reg2},
and then we will work towards deriving the lower bound.

\begin{proof}[Proof of upper bound of Theorem~\ref{2reg2}]
	This follows from Theorem~\ref{gen1}.
\end{proof}

\begin{Proposition} \label{C4cpts}
Any graph consisting of $C_{4}$ components will provide a tight example to the upper bound in Theorem~\ref{2reg2}.
\end{Proposition}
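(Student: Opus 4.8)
The plan is to observe that Theorem~\ref{2reg2} already supplies the upper bound $u(G) \leq \frac{n}{4}$, so it suffices to exhibit a strategy by which Isolator guarantees at least $\frac{n}{4}$ untouched vertices, which forces equality. I would write $G$ as a disjoint union of $m$ copies of $C_4$, so that $n = 4m$ and the target becomes isolating at least $m$ vertices, i.e.~one per component.

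First I would settle the single-component case: playing second on an isolated $C_4$, Isolator can always isolate one vertex. Labelling the four edges $e_1,e_2,e_3,e_4$ cyclically, after Toucher's first move (wlog $e_1$) Isolator replies with the opposite edge $e_3$. Only $e_2$ and $e_4$ then remain free, and whichever of these Toucher claims, Isolator takes the other; in both cases Isolator ends up holding the two edges incident to a common vertex (either $\{e_2,e_3\}$ at vertex $3$ or $\{e_3,e_4\}$ at vertex $4$), thereby isolating it. This is exactly the argument already used for the $C_4$ tight example to Theorem~\ref{cyc2}.

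To lift this to the whole board I would invoke the partition-of-the-board technique, having Isolator always respond in the same $C_4$ component in which Toucher has just moved. The key point legitimising this is that each component has an even number ($4$) of edges: whenever Toucher plays in a component, an odd number of its edges becomes claimed, so a free edge always remains for Isolator's reply; moreover the first move ever made in any given component is necessarily Toucher's, so each component witnesses an alternating Toucher-first sequence of four moves, exactly mirroring the isolated single-$C_4$ game analysed above. Hence Isolator isolates one vertex in every component, giving $u(G) \geq m = \frac{n}{4}$ and therefore $u(G) = \frac{n}{4}$.

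The only care needed is in the bookkeeping for the partition step—verifying that Isolator never lacks a legal reply and that the induced per-component game genuinely begins with a Toucher move—but both follow immediately from the evenness of the component edge counts, so I would not expect any substantive obstacle here.
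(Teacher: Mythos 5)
Your proposal is correct and follows essentially the same route as the paper: the upper bound comes from Theorem~\ref{2reg2}, and Isolator achieves the matching lower bound by answering Toucher's first move in any $C_4$ component with the opposite edge and then claiming the fourth edge of that component once Toucher takes the third. Your extra bookkeeping about parity and Toucher always moving first in each component is a sound (if slightly more verbose) justification of the same component-wise pairing strategy the paper uses.
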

\begin{proof}
Note that Isolator can certainly isolate one vertex from each such component
by always immediately taking the opposite edge in any $C_{4}$ on which Toucher plays
and then taking the fourth edge
as soon as Toucher takes the third edge.
\end{proof}

The proof of the lower bound in Theorem~\ref{2reg2} will involve treating the components differently depending on their size modulo $6$,
so we shall find it useful to first prove three lemmas related to this. We begin by applying Theorem~\ref{gen2} to obtain a result specific to the case when a cycle has length $k \in \{4,5,6\} \bmod 6$.

\begin{Lemma} \label{2reglem1}
Let $k \in \{ 4,5,6 \} \bmod 6$.
Then
\begin{displaymath}
u(C_{k}) \geq \frac{k}{6}.
\end{displaymath}
\end{Lemma}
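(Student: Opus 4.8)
The plan is to apply the lower bound of Theorem~\ref{gen2}, namely $u(G) \geq d_0 + \frac{1}{2}d_1 - 1$, to the cycle $C_k$. This general bound, however, is useless here: a cycle is $2$-regular, so $d_0 = d_1 = 0$ and the bound degenerates to $u(C_k) \geq -1$. So a direct application cannot work, and the real content must come from partitioning the board and reducing to a situation where vertices of low degree actually appear. My first move would therefore be to have Isolator chop the cycle into disjoint pieces on which he plays independently (using the partition-of-the-board method described in the introduction), and to arrange the pieces so that after his play the relevant reduced graph has many degree-$0$ and degree-$1$ vertices, to which Theorem~\ref{gen2} can be fruitfully applied.

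More concretely, I expect the cleanest route is to express $u(C_k)$ in terms of quantities already controlled in the preceding material. The arithmetic $\frac{k}{6}$ for $k \equiv 4,5,6 \pmod 6$ strongly suggests grouping the edges of $C_k$ into consecutive blocks of length $6$ (with a small remainder governed by $k \bmod 6$), and guaranteeing one isolated internal vertex per block. Within a block of six consecutive free edges, Isolator should be able to force an untouched vertex by a short pairing or central-edge argument of the same flavour as Claim~\ref{cl1} and Claim~\ref{cl2}; indeed a block of length $6$ comfortably contains a segment of three consecutive free edges after Toucher's intrusion, so Claim~\ref{cl1} would hand Isolator one isolated vertex per block. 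Counting $\lfloor k/6 \rfloor$ (or the appropriate value depending on the residue) such blocks and checking that the count is at least $\frac{k}{6}$ for each of the three residues $4,5,6$ should then close the argument.

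The subtlety---and the step I expect to be the main obstacle---is the \emph{bookkeeping at the boundaries between blocks and the handling of the remainder edges}, exactly the kind of leftover-segment analysis that dominated the proof of the lower bound of Theorem~\ref{cyc2}. Because the bound is claimed to be \emph{tight} and the constant $\frac{1}{6}$ is delicate, I cannot afford to lose fractional vertices at block junctions: I must verify that the three residue classes $k \equiv 4, 5, 6 \pmod 6$ each genuinely yield at least $k/6$ isolated vertices and not merely $\lfloor k/6 \rfloor$, which for $k=4,5$ would be too weak. This is presumably why the lemma is stated only for these residues and why the authors flag an application of Theorem~\ref{gen2}: the likely intended proof is short, contracting each already-played segment so that the reduced cycle acquires degree-$1$ vertices, and then reading off $d_1$ from the number of such segments so that $\frac{1}{2}d_1 - 1 \geq \frac{k}{6}$ holds for precisely these residues. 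I would double-check the three cases by hand, since an off-by-one at a single residue would break tightness.
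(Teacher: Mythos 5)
Your plan stops exactly where the work begins, and the route you sketch does not close. Partitioning $C_k$ into consecutive $6$-edge blocks and extracting one untouched vertex per block via Claim~\ref{cl1} gives at most $\lfloor k/6\rfloor$ vertices; as you yourself note, for $k=6r+4$ and $k=6r+5$ this is $r$, whereas the lemma (after integrality) demands $r+1$. The extra vertex would have to come from the $4$- or $5$-edge remainder, and there the argument genuinely fails: if Toucher opens in a $4$-edge remainder $f_1f_2f_3f_4$ by taking $f_2$, the free edges split into runs of length $1$ and $2$, Claim~\ref{cl1} is inapplicable, and Isolator cannot force an isolated internal vertex there by in-segment responses alone. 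So the ``bookkeeping at the boundaries'' you defer is not bookkeeping --- it is the entire content of the lemma for two of the three residues, and you never supply it. (Your opening observation is right, though: the lower bound of Theorem~\ref{gen2} degenerates to $u(C_k)\ge -1$ here; the paper's sentence announcing an application of Theorem~\ref{gen2} before this lemma appears to be a slip for Theorem~\ref{cyc2}.)

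The paper's actual proof is a two-line corollary of the bound you already had in hand. Write $k=6r+s$ with $s\in\{4,5,6\}$. Theorem~\ref{cyc2} gives
\[
u(C_k)\;\ge\;\tfrac{3}{16}(k-3)\;=\;\tfrac{3}{16}(6r+s-3)\;\ge\;\tfrac{3}{16}(6r+1)\;>\;r,
\]
and since $u(C_k)$ is an integer this forces $u(C_k)\ge r+1\ge \tfrac{k}{6}$. No new decomposition, no remainder analysis: all the hard segment work was already done in the proof of Theorem~\ref{cyc2}, and the restriction to $s\in\{4,5,6\}$ is precisely what makes the final inequality $r+1\ge r+s/6$ hold. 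If you want a self-contained proof instead, you would need a genuinely sharper treatment of the short leftover segment than Claim~\ref{cl1} provides.
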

\begin{proof}
Let us write $k$ as $6r+s$,
where $s \in \{ 4,5,6 \}$.
Then, by Theorem~\ref{cyc2},
we have
\[
u(C_{k}) \geq \frac{3}{16} (k-3) = \frac{3}{16} (6r+s-3) \geq \frac{3}{16} (6r+1) > r.
\]

Since $u(C_{k})$ must be an integer, we then in fact have $u(C_{k}) \geq r+1 \geq \frac{k}{6}$, and we are done.
\end{proof}

\begin{Remark}
It is also relatively simple to give a self-contained proof of this result,
rather than using Theorem~\ref{cyc2}.
\end{Remark}

Note that the bound of Lemma~\ref{2reglem1}
is certainly not valid for all $k$
(e.g.~consider $C_{3}$).
Hence,
in the next two lemmas
we shall deal separately with components of length $k \in \{1,2,3\} \bmod 6$.
We shall find it extremely helpful to consider the case when
Isolator allows Toucher to have the first two moves in such a component.

\begin{Lemma} \label{2reglem2}
Let $k \in \{ 1,2,3 \} \bmod 6$.
Then Isolator can guarantee that the number of untouched vertices in $C_{k}$ will be at least $\frac{k-3}{6}$
even if Toucher has the first two moves
(and Isolator and Toucher play alternately after this).
\end{Lemma}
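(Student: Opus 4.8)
The plan is to begin by granting Toucher her two free moves and then analysing the resulting position. After these two moves, Toucher has claimed two edges of the cycle, so the $k-2$ remaining free edges form at most two arcs of consecutive free edges (a single arc if Toucher's two edges are adjacent, and two arcs otherwise), and crucially it is now Isolator's turn to move. Writing $k = 6r+s$ with $s \in \{1,2,3\}$, one checks that $\frac{k-3}{6} \le r$ in each case, so, since the number of untouched vertices is an integer, it suffices to show that Isolator, moving first, can isolate at least $r$ internal vertices of these arcs.

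The engine for this will be Claim~\ref{cl1} and Claim~\ref{cl2}, exactly the segment tools already used for cycles, but now exploited with Isolator moving first. First I would note that a total free length of $k-2 \in \{6r-1,6r,6r+1\}$ is available, and that the aim is to extract one isolated vertex per (roughly) six free edges. The strategy is to carve the arcs into segments of length five, with a short leftover, and to have Isolator play first in each such segment in turn, applying Claim~\ref{cl2}. The amortisation rests on the case structure of Claim~\ref{cl2}: in the cheap outcome~(a) Isolator isolates one vertex while \emph{both} of Toucher's replies stay inside the segment, so Toucher cannot have disturbed any segment not yet processed; in the richer outcomes (b),(c),(d) Isolator banks two or three vertices, and the extra vertices pay for the at most two replies Toucher diverts elsewhere. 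This is essentially the bookkeeping of Case~$1$ of Lemma~\ref{3of16lemma}, simply run at the coarser rate of one vertex per six edges rather than three per sixteen.

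Because $k-2$ is not a multiple of five, and because Toucher's two edges may split the free edges into two arcs rather than one, there will be short leftover pieces to dispose of. I would handle these with the short-segment cases, applying Claim~\ref{cl1} to any surviving run of three consecutive free edges and Claim~\ref{cl2} to runs of five, in direct analogy with the leftover analysis at the end of the proof of the lower bound of Theorem~\ref{cyc2}. The task here is to verify that, for each residue $s \in \{1,2,3\}$ and for the worst placement of Toucher's two free edges, the free edges can always be grouped so that $r$ vertices are guaranteed; the two-arc split is the mildly awkward sub-case, since length is lost at the new arc endpoints.

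The main obstacle I anticipate is the amortised accounting itself, carried out simultaneously across several segments and across the (up to two) arcs created by the two free moves. One must ensure that every reply Toucher ``saves'' from a fully-defended segment in order to attack a distant one is matched by an extra isolated vertex credited by Claim~\ref{cl2}, and that no short leftover arc strands edge-length that was being relied upon for a vertex. It is precisely this two-move handicap that degrades the attainable rate from the $\frac{3}{16}$ of Theorem~\ref{cyc2} down to $\frac{1}{6}$, and the delicate point is to confirm that the handicap costs no more than the ``$-3$'' appearing in the numerator of $\frac{k-3}{6}$, uniformly over all residues $s$ and all splits of the two free edges.
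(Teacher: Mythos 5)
Your reduction to ``Isolator, moving first on at most two arcs totalling $k-2$ free edges, isolates $r$ vertices'' is sound, and for small $r$ the case checks go through. But the heart of your argument --- the amortised accounting that extracts one vertex per six free edges from a decomposition into $5$-edge segments via Claim~\ref{cl2} --- is exactly the part you leave unverified, and it does not transfer verbatim from Lemma~\ref{3of16lemma}. A concrete failure point: in Case~1 of that lemma, outcomes (b)/(c) are rescued by the fact that the nine edges $\{e_8\}\cup S_2\cup S_3$ are \emph{consecutive}, so deleting Toucher's at most two stray edges still leaves a run of three. Once the free edges are split into two arcs (which happens precisely when Toucher's two free moves are non-adjacent), nine remaining free edges can sit as arcs of sizes $4$ and $5$, and two well-placed Toucher edges then destroy every run of three there. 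So the leftover/two-arc analysis is not a routine adaptation of the end of the proof of Theorem~\ref{cyc2}; it needs either larger segments, a different grouping, or a separate induction, none of which you supply. As written, the proposal is a plan whose central step --- which you yourself identify as ``the delicate point'' --- is still open.

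For comparison, the paper avoids all of this with a much lighter argument: after Toucher's (wlog) first edge, it partitions the remaining edges into $6$-edge sections, views each as two $3$-edge halves, and has Isolator respond to Toucher's first move in a section by taking the central edge of the \emph{other} half, thereafter pairing that centre's two neighbours. This is a purely reactive pairing strategy, so the two-move handicap and the arc structure become irrelevant; each section yields two consecutive Isolator edges, hence one untouched vertex, and counting the $\lceil (k-3)/6 \rceil$ sections finishes the proof. If you want to salvage your route, you would need to prove a clean quantitative version of the Claim~\ref{cl2} amortisation that is valid across two arcs, which is considerably more work than the statement requires.
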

\begin{proof}
Wlog (since we have a cycle), Toucher makes her first move in edge $1$.

For every $6$-edge section after this
(i.e.~edges $2$--$7$,
edges $8$--$13$,
etc.),
we can consider the six edges as two $3$-edge segments
(e.g.~edges $2$--$7$ will be considered as two $3$-edge segments $2$--$4$ and $5$--$7$).

Whenever Toucher plays in one of these $3$-edge segments,
Isolator can then immediately take the central edge of the other $3$-edge segment,
and Isolator can also always eventually take one of the edges either side of this central edge
(since when Toucher takes one,
Isolator can just immediately take the other).

Hence, Isolator can certainly always obtain two consecutive edges in each of these $6$-edge sections of the cycle, so there will be an untouched vertex each time.

Now observe that there are exactly $\left \lceil \frac{k-3}{6} \right \rceil$ such sections,
since $k \in \{ 1,2,3 \} \bmod 6$, so we are done.
\end{proof}

We shall also find it helpful to consider the case when
Isolator makes the first move in a component.

\begin{Lemma} \label{2reglem3}
Let $k \in \{ 1,2,3 \} \bmod 6$.
Then Isolator can guarantee that the number of untouched vertices in $C_{k}$ will be at least $\frac{k+3}{6}$
if Isolator plays first
(and Toucher and Isolator play alternately after this).
\end{Lemma}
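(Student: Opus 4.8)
The plan is to show that Isolator's single ``free'' opening move is worth exactly one extra isolated vertex relative to Lemma~\ref{2reglem2}, where Toucher is instead handed the first two moves (note that $\frac{k+3}{6}-\frac{k-3}{6}=1$). First I would let Isolator open by claiming an arbitrary edge $e$. The continuation is then ``Toucher to move, followed by strict alternation'', which is exactly the turn order reached after the first move in Lemma~\ref{2reglem2} (there, after Toucher's opening move it is still Toucher to move, then $I,T,I,\dots$); the only difference is that the one already-claimed edge $e$ now belongs to Isolator rather than to Toucher. Deleting $e$ leaves the free edges forming a path on all $k$ vertices, and the two ends $x,y$ of this path (the endpoints of $e$) now each need only one further Isolator edge to be isolated, so they behave like degree-one vertices; the remaining game is thus the Toucher--Isolator game on $P_k$ with Toucher to move.

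For the bulk of the bound I would reuse the section strategy from the proof of Lemma~\ref{2reglem2} essentially verbatim: partitioning $6r$ of the free edges into $r$ disjoint six-edge sections and applying the twin-three-segment response guarantees $r$ isolated \emph{internal} vertices, and this is insensitive to who owns $e$. It then remains to extract one further isolated vertex, and this is where ownership of $e$ enters: since $x$ and $y$ are cheap (degree-one-like) ends, Isolator reserves their incident free edges outside the sections and exploits his surplus of moves -- he makes $\lceil k/2\rceil$ moves in total, whereas the sections consume only $2r$ of them -- to claim one reserved edge and isolate the corresponding end. Counting $r$ internal vertices plus this one end gives $r+1=\lceil\frac{k+3}{6}\rceil$; since $u$ is an integer and $r+1\geq\frac{k+3}{6}$, this yields the claimed bound for every $s\in\{1,2,3\}$.

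The hard part is the tightest residue $k\equiv 1 \pmod 6$ (i.e.\ $s=1$), where only $6r$ free edges must produce $r+1$ vertices and the bound is forced purely by integrality; here a partition into independent board segments does not leave enough room, and the surplus-move bookkeeping above must be made honest. The delicate point is that Toucher can pre-emptively claim \emph{both} reserved end-edges, killing both cheap ends -- but doing so costs her two moves that are then unavailable against the sections, and the argument must convert this tempo back into an extra isolated vertex. I expect the cleanest uniform remedy is to set up \emph{simultaneous} threats at the two ends, so that Toucher cannot defend both with a single reply, rather than treating the ends as an independent segment; an alternative is an induction peeling off six edges at a time while preserving which player is to move (most naturally phrased on $P_k$), with the small cases $C_3,C_7,C_8,C_9$ checked by hand. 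In either route, verifying that the end-play never disrupts the section responses (and conversely) is the main technical obstacle.
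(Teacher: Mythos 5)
Your reduction to a path with two ``cheap'' ends is a reasonable framing, but the proposal does not actually prove the lemma: the entire content of Lemma~\ref{2reglem3} beyond Lemma~\ref{2reglem2} is the single extra isolated vertex, and that is precisely the step you leave as ``the main technical obstacle.'' The surplus-move bookkeeping is not an argument on its own --- Toucher also has surplus moves relative to what the sections consume, and since the section strategy is purely responsive (Isolator plays in whichever section Toucher just played in), a Toucher move on a reserved end-edge hands Isolator a ``free'' move without any specification of how that free move yields an isolated vertex; your own example of Toucher pre-emptively killing both ends is exactly the line that is not refuted. Worse, in the residue $s=1$ the path has exactly $6r$ free edges, so the $r$ sections exhaust the board and there are no reserved end-edges at all; to get $r+1$ vertices you would have to show that some end section yields \emph{two} isolated vertices, which means reworking the section strategy near the ends rather than bolting on a reservation scheme. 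Until one of your two suggested remedies (simultaneous end-threats, or a six-edge induction with base cases) is actually carried out, the bound $\frac{k+3}{6}$ is not established.

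The paper avoids this difficulty with a different Isolator strategy. Isolator greedily grows his first edge into a maximal string of consecutive edges; if he is never blocked he ends with $\left\lceil k/2 \right\rceil$ consecutive edges and far more than $\frac{k+3}{6}$ isolated vertices. Otherwise, when the string of $j \geq 2$ edges is finally blocked on both sides, Toucher has spent two moves on the blocking edges and only $j-2$ ``rogue'' moves elsewhere, so among the roughly $\frac{k-j-4}{3}$ disjoint $3$-edge segments of the untouched arc at least $\frac{k-4j+2}{3}$ are completely unspoilt; Isolator (who has the move) then claims the centre of an unspoilt segment, and thereafter answers each first Toucher move in an unspoilt segment by taking the centre of another, securing an isolated vertex from at least half of them. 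The count $(j-1) + \frac{k-4j+2}{6} = \frac{k+2j-4}{6} \geq \frac{k}{6}$ together with integrality and $k \in \{1,2,3\} \bmod 6$ gives $\frac{k+3}{6}$. The key trade-off --- every edge of Isolator's string beyond the second costs Toucher a tempo that would otherwise spoil a segment --- is what replaces your unproven ``convert the tempo back into an extra vertex'' step, and it is quantified exactly rather than asserted.
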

\begin{proof}
The $k=3$ case can easily be verified, so let us assume that $k \geq 7$.

Let Isolator initially use the strategy of trying to extend his first edge into a long string of consecutive edges,
by always choosing an edge immediately adjacent to his current string until Toucher has `blocked' both sides of this string with edges of her own
(note that these two edges of Toucher will be distinct,
since $k>3$).

If Isolator is able to use this strategy for the entire game,
then he will finish with
$\left \lceil \frac{k}{2} \right \rceil$ consecutive edges,
and hence the number of untouched vertices will be
$\left \lceil \frac{k}{2} \right \rceil - 1$,
which is certainly greater than $\frac{k+3}{6}$
(since we are assuming that $k \geq 7$),
so we are done.
Thus, we may assume that this does not happen.

Let us therefore consider the state of the game at the time when Isolator is about to make his first move for which he is no longer able to use this strategy
(due to both sides having been blocked by Toucher).

Suppose Isolator had managed to achieve a string of $j$ consecutive edges (note it must be that $j \geq 2$),
and wlog let these be edges $2$ to $(j+1)$.
Hence, Toucher has edges $1$ and $j+2$,
and Toucher also has another $j-2$ `rogue' edges elsewhere.

Let us split the edges from $j+3$ to $k$ into $3$-edge segments
(i.e.~edges $j+3$ to $j+5$,
edges $j+6$ to $j+8$,
and so on,
ignoring the final one or two edges if $k-(j+2)$ is not congruent to $0$ mod $3$).

There will be at least $\frac{k-2-(j+2)}{3} = \frac{k-j-4}{3}$ such segments,
at most $j-2$ of which will contain one of Toucher's rogue edges.
Hence, at least
$\frac{k-j-4}{3} - (j-2) = \frac{k-4j+2}{3}$ of these segments will be `unspoilt',
in the sense that none of their edges have yet been taken by either player.

Recall that Isolator has the next move.
Hence, he can immediately take the central edge from one of the unspoilt segments,
and
(as in the proof of Lemma~\ref{2reglem2})
can also always eventually take one of the edges either side of this central edge,
thus isolating a vertex.

Whenever Toucher plays first in one of the unspoilt segments,
Isolator can then immediately take the central edge of any remaining unspoilt segment,
again eventually isolating a vertex.

Hence, we see that Isolator will be able to guarantee
at least one untouched vertex
from at least half of the
segments that were unspoilt.
Thus, he will obtain at least
$\frac{k-4j+2}{6}$ such vertices,
together with the $j-1$ vertices that he already had
from his string of $j$ consecutive edges.

Hence, the total number of untouched vertices adds up to at least
$\frac{k+2j-4}{6}$,
which is at least $\frac{k}{6}$ by our observation that $j \geq 2$,
and at least $\frac{k+3}{6}$ due to integrality
and the fact that $k \in \{1,2,3\} \bmod 6$.
\end{proof}

We are now ready to use our three lemmas to complete the proof of Theorem~\ref{2reg2}.

\begin{proof}[Proof of lower bound of Theorem~\ref{2reg2}]
Recall from Lemma~\ref{2reglem1}
that Isolator can guarantee
that at least $\frac{1}{6}$ of the vertices
from each component of size $k$ for $k \in \{4,5,6\} \bmod 6$ will be untouched.
Hence, it only remains to deal with the other components.

Let us pair up these other components into partners,
with at most one such component left over
(it will not matter whether the partners have the same size $\bmod$~$6$,
only that the sizes belong to $\{1,2,3\} \bmod 6$).

When Toucher first plays in one of a pair,
let Isolator make one move in the partner.
After this,
whenever Toucher plays again anywhere in this pair,
let Isolator respond in the same component as Toucher
(so Toucher will have the first two moves in one of the pair,
with alternate moves after this,
and Isolator will have the first move in the partner,
with alternate moves after this).

By Lemmas~\ref{2reglem2} and~\ref{2reglem3},
if two paired components have size $k_{1}$ and $k_{2}$, respectively,
then Isolator can guarantee that the number of untouched vertices in these two components
will be at least
$\frac{k_{1}-3}{6} + \frac{k_{2}+3}{6} = \frac{k_{1}+k_{2}}{6}$.
Thus,
Isolator can guarantee that at least $\frac{1}{6}$ of the vertices
from each pair will be untouched.

By then applying Lemma~\ref{2reglem2} as a lower bound
for the number of untouched vertices
in the leftover component (if one exists),
we hence obtain our result.
\end{proof}

\begin{Proposition} \label{oddC3cpts}
Any graph consisting of an odd number of $C_{3}$ components
will provide a tight example to the lower bound in Theorem~\ref{2reg2}.
\end{Proposition}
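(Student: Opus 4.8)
The plan is to complement the lower bound of Theorem~\ref{2reg2} by exhibiting a Toucher strategy that keeps the number of untouched vertices down to $\frac{n-3}{6}$. Write the graph as a disjoint union of $m$ triangles, where $m$ is odd, so that $n=3m$ and the target value is $\frac{n-3}{6}=\frac{m-1}{2}$, which is an integer. I would first record the elementary fact that, in a single triangle, the number of untouched vertices is $0$ if Toucher claims at least two of its three edges, is $1$ if Toucher claims exactly one edge (Isolator then holds the two edges through the common vertex), and is $3$ only if Toucher claims none of its edges. Thus it suffices to design a strategy guaranteeing that Toucher secures at least two edges in all but $\frac{m-1}{2}$ of the triangles, while still touching at least one edge in every triangle, so that no triangle contributes $3$.

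The strategy uses the partition-of-the-board idea. Toucher opens with an arbitrary edge in a distinguished triangle $T_{0}$, and the remaining $m-1$ (an even number) triangles are grouped into $\frac{m-1}{2}$ disjoint pairs. The regions of the partition are $T_{0}$ together with these pairs, and Toucher always replies in the same region in which Isolator has just moved; within a pair she replies in the same triangle that Isolator played in if it still has a free edge, and otherwise in the other triangle of the pair. After Toucher's opening move every region contains an even number of free edges ($2$ in $T_{0}$ and $6$ in each pair), and this parity is preserved after every Isolator-then-Toucher exchange inside a region. Hence Toucher can always carry out her reply, and she ends up claiming exactly $2$ of the three edges of $T_{0}$ and exactly $3$ of the $6$ edges of each pair (Isolator being forced to take the complementary $1$ and $3$ edges respectively).

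The heart of the argument is then a short per-region count. Since Toucher takes two of the three edges of $T_{0}$, that triangle is fully touched and contributes $0$. In any pair Isolator holds only $3$ of the $6$ edges, so he cannot hold two edges in both triangles, as that would require four; hence Toucher has at least two edges in at least one triangle of the pair. A routine check of the (few) possible orders in which Isolator distributes his three pair-edges shows that Toucher's within-pair rule always produces the split in which she takes two edges of one triangle and one edge of the other. Consequently each pair contributes exactly one untouched vertex and, crucially, Toucher touches an edge in both triangles of the pair, so neither becomes an all-Isolator triangle. Summing over regions gives $0+\frac{m-1}{2}$ untouched vertices, and combined with Theorem~\ref{2reg2} this yields $u(G)=\frac{n-3}{6}$.

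The main obstacle I anticipate is precisely this within-pair analysis: one must verify, against every order in which Isolator distributes his three pair-edges, that Toucher's reply rule never leaves her with all three edges in one triangle and none in the other, which would create an isolated triangle contributing $3$ rather than $1$. This is a finite case check rather than a deep difficulty, but it is the step that actually forces the value $\frac{m-1}{2}$ and so must be carried out carefully; the only other point needing attention is the parity bookkeeping that guarantees Toucher can always respond inside the correct region and that $T_{0}$ is eventually entered (which it must be, since all its edges are claimed by the end of the game).
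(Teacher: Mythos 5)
Your proof is correct and follows essentially the same approach as the paper: Toucher opens in one distinguished triangle, pairs up the remaining $m-1$ triangles, and uses a region-response strategy to guarantee two of her edges in the opening triangle and a $2{+}1$ split across each pair, giving at most $\frac{m-1}{2}=\frac{n-3}{6}$ untouched vertices, which matches the lower bound of Theorem~\ref{2reg2}. The only difference is the precise within-pair response rule (the paper has Toucher seize the first move in the partner triangle, whereas you reply in the same triangle while it has a free edge), but both rules yield the same count.
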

\begin{proof}
Note that Toucher can make the first move in Component~$1$, say,
and can then pair up the remaining components
to ensure that she can also make the first move in half of these.

In every component
in which Toucher made the first move,
she can guarantee eventually taking a second edge
and hence leaving no untouched vertices.
In every other component,
she can guarantee eventually taking one edge
and hence leaving only one untouched vertex.
\end{proof}

\begin{Remark} \label{degseqremark}
Note that Theorem~\ref{cyc2} implies that
the lower bound of Theorem~\ref{2reg2} will not be tight for $C_n$ if $n>3$.
Thus,
as mentioned earlier in Remark~\ref{genremark2},
this observation together with the tight example of Proposition~\ref{oddC3cpts}
shows that $u(G)$ is not solely determined by the degree sequence
(see also Section~\ref{3reg}).
\end{Remark}

\section{Trees} \label{trees}

In the previous section, we explored one way of generalising the playing board
from the cycles and paths considered in Theorem~\ref{cyc2} and Theorem~\ref{path1},
by investigating general $2$-regular graphs.
In this section,
we consider another natural extension, by instead examining general trees.
We start by proving the upper bound of Theorem~\ref{tree}
and providing a family of tight examples,
and then we also prove the lower bound and give a tight example.

\begin{proof}[Proof of upper bound of Theorem~\ref{tree}]	
By Theorem~\ref{gen1}, we have
\begin{displaymath}
u(T) \leq \sum_{v \in V(T)} 2^{-d(v)}.
\end{displaymath}
Note that (since $T$ can have no vertices of degree $0$)
the sum on the right-hand-side is maximised
when all but one of the vertices have degree $1$,
since otherwise one can always achieve a higher value
by decreasing the second largest degree by $1$ and increasing the largest degree by $1$.

Hence, we obtain
\begin{displaymath}
u(T) \leq \frac{n-1}{2} + 2^{1-n}.
\end{displaymath}

But since $n \geq 3$,
we have $2^{1-n} < \frac{1}{2}$,
so the integrality of $u(T)$
then implies that we must actually have
$u(T) \leq \frac{n-1}{2}$.
\end{proof}

\begin{Proposition} \label{treeex1}
Any star with an odd number of vertices
will provide a tight example to the upper bound in Theorem~\ref{tree}.
\qed
\end{Proposition}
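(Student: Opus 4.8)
The plan is to compute $u(G)$ exactly for a star, exploiting the fact that all of its edges are interchangeable. Write the odd number of vertices as $n = 2m+1$ with $m \geq 1$; then the star has a single central vertex of degree $n-1$, together with $n-1$ leaves of degree $1$, and in total it has $n-1 = 2m$ edges.

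First I would observe that on a star the game is essentially free of strategic content. Every edge joins the centre to a distinct leaf, so whenever Toucher claims an edge she simultaneously touches the centre and exactly one leaf, and whenever Isolator claims an edge he isolates exactly one leaf. In particular, after Toucher's very first move the centre is permanently touched (here we use $n > 2$, so that at least one edge exists and Toucher is obliged to claim it). From then on, the set of untouched vertices consists \emph{precisely} of those leaves whose incident edge has been claimed by Isolator.

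Next I would invoke the parity of the number of edges. Since Toucher moves first and there are $2m$ edges, the two players claim exactly $m$ edges each by the end of the game, irrespective of how either of them plays. Consequently Isolator claims $m = \frac{n-1}{2}$ edges, isolating exactly $\frac{n-1}{2}$ leaves, and these are the only untouched vertices. Hence $u(G) = \frac{n-1}{2}$, which matches the upper bound of Theorem~\ref{tree} exactly.

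The only point requiring any care is the verification that no cleverer play by either player can alter this count. This is immediate from the preceding two observations: the symmetry of the star together with the parity of $n-1$ forces every leaf's edge to go to exactly one player, so neither the optimal strategy of Toucher nor that of Isolator can deviate from $\frac{n-1}{2}$ untouched vertices. Since there are infinitely many odd $n > 2$, this yields the desired infinite family of tight examples, and no genuine obstacle arises.
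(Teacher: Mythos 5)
Your proof is correct and is precisely the immediate argument the authors had in mind (the paper omits the proof entirely, marking the proposition with \qed): the centre is touched by Toucher's first move, each of Isolator's edges isolates a distinct leaf, and the parity of the $n-1$ edges forces Isolator to claim exactly $\frac{n-1}{2}$ of them, giving $u(G)=\frac{n-1}{2}$.
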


We now move on to the lower bound.

\begin{proof}[Proof of lower bound of Theorem~\ref{tree}]
In the main part of the proof, we shall work towards showing
\begin{equation} \label{tree2eqnb}
u(T) \geq \frac{n+d_{1}-1}{8}.
\end{equation}
The result will then follow from a combination of~\eqref{tree2eqnb}, Theorem~\ref{path1},
and one special case that will need to be considered separately.
	
In order to establish~\eqref{tree2eqnb}, we shall proceed by
first (i)~analysing the proof of the lower bound of Theorem~\ref{gen1}
to see that some aspects can be improved slightly
when the graph is known to be a tree,
then (ii)~obtaining a useful result on the average degree of the non-leaves,
and finally (iii)~using this to optimise our bound. 

\paragraph{(i)}
Recall that the proof of the lower bound of Theorem~\ref{gen1}
utilised the concept of the Danger of a vertex.
The bound obtained then followed from showing that
the total Danger will decrease by at most $1$
with Toucher's first move,
and then by at most $\frac{1}{4}$
with every subsequent pair of moves
if Isolator uses the tactic of always choosing 
the edge which maximises the sum of the Dangers of the two vertices incident to it. 

However,
it can immediately be seen that for a tree with $n>2$ vertices,
the total Danger can actually only decrease by at most $\frac{3}{4}$ with Toucher's first move,
since there cannot be two adjacent leaves.
Hence,
we can certainly add an extra $1- \frac{3}{4} = \frac{1}{4}$
to the lower bound obtained in Theorem~\ref{gen1}.
We shall now also show that the total Danger can only decrease by at most $\frac{1}{8}$
with the first subsequent pair of moves,
meaning that we can then add a further
$\frac{1}{4} - \frac{1}{8} = \frac{1}{8}$
to this bound.

To see this,
first note that
(with the stated tactic)
Isolator will certainly take an unplayed edge $uw$
incident to a leaf $w$ on his first move.
If we use $\textrm{D}(z)$
to denote the Danger of a vertex $z$ after Toucher's first move,
then Isolator's move thus causes the total Danger to temporarily increase by $\frac{1}{2} + \textrm{D}(u)$.

In order for the total Danger to then decrease back by more than
$\frac{5}{8} + \textrm{D}(u)$
with Toucher's next move,
note that she would have to take an adjacent edge $uv$
(due to the maximality condition in Isolator's strategy)
satisfying
$2 \textrm{D}(u) + \textrm{D}(v) > \frac{5}{8} + \textrm{D}(u)$,
i.e.~$\textrm{D}(u) + \textrm{D}(v) > \frac{5}{8}$.
	
Since $u$ cannot be a leaf
(as it is adjacent to the leaf $w$),
this is only possible if
$\textrm{D}(u) = \frac{1}{4}$
and $\textrm{D}(v) = \frac{1}{2}$.
But in this case,
the entire tree $T$
would consist of just the path $wuv$,
which would contradict the fact that
Toucher has already been able to take one edge somewhere with her first move!

Hence,
we find that we are indeed able to add the promised increments
to the lower bound given in Theorem~\ref{gen1}
if $T$ is a tree (with $|V(T)|>2$),
and we thus obtain
\begin{equation}
u(T) \geq \sum_{v \in V(T)} 2^{-d(v)} - \frac{|E(T)|+7}{8} + \frac{1}{4} + \frac{1}{8}
 =  \frac{d_{1}}{2} + \sum_{v:d(v) \geq 2} 2^{-d(v)} - \frac{n}{8} - \frac{3}{8}. \label{tree2eqnc}
\end{equation}

\paragraph{(ii)} We shall now work towards our aforementioned result
on the average degree of the non-leaves of $T$.
Let us first recall that
(since $n>2$)
any two leaves must be non-adjacent,
and so it is then clear that $u(T) \geq \frac{d_{1}-1}{2}$.
Hence,~\eqref{tree2eqnb}
is certainly satisfied
if $d_{1} \geq \frac{n}{3} + 1$,
so we may assume that $d_{1} < \frac{n}{3} + 1$.

Now let $x$ denote the average degree of the $n-d_{1}$ non-leaves,
and observe that
$d_{1} + x(n-d_{1}) = 2n-2$,
and
$
x = 1 + \frac{n-2}{n-d_{1}}.
$
Thus,
since $d_{1} < \frac{n}{3} + 1 < \frac{n}{2} + 1$,
we have $x<3$. 
	
\paragraph{(iii)} We shall now utilise our bound on $x$ in conjunction with~\eqref{tree2eqnc}.

Note that (for given $d_{1}$)
the sum
$\sum_{v:d(v) \geq 2} 2^{-d(v)}$
is minimised
when the non-leaves all have degrees differing by at most $1$,
since otherwise one can always achieve a lower value
by increasing the smallest non-leaf degree by $1$
and decreasing the largest non-leaf degree by $1$.
	
Hence,
since $x<3$,
we find that the sum
$\sum_{v:d(v) \geq 2} 2^{-d(v)}$
is minimised
(for given $d_{1}$)
when the non-leaves all have degree $2$ or $3$.
In this case,
we have
$d_{1} + 2d_{2} + 3d_{3} = 2(n-1)$
and $d_{2} = n-d_{1}-d_{3}$,
and so we obtain
$d_{2} = n+2-2d_{1}$
and $d_{3}=d_{1}-2$.

Thus,~\eqref{tree2eqnc} then gives
\[
	u(T)  \geq \frac{d_{1}}{2} + \frac{n+2-2d_{1}}{4} + \frac{d_{1}-2}{8} - \frac{n}{8} - \frac{3}{8} = \frac{n+d_{1}-1}{8},
\]
as desired. \\

If $d_{1} > 2$,
then we are done.
If not,
then $T$ must be a path,
and we can look to apply our lower bound
$u(P_{n}) \geq \frac{3}{16} (n-2)$
from Theorem~\ref{path1}.

We certainly have
$\frac{3}{16} (n-2) \geq \frac{n+2}{8}$ for $n \geq 10$,
and it can also be checked that
$\left \lceil \frac{3}{16} (n-2) \right \rceil \geq \frac{n+2}{8}$
for $n \in \{3,4,5,6,8,9\}$,
leaving only the case when $T=P_{7}$.

For this final case,
it suffices to show that
Isolator can always guarantee that
at least two of the vertices will be untouched,
and it can be checked that this is indeed so.
\end{proof}

\begin{Proposition} \label{treeex2}
The graph $P_{6}$
provides a tight example to the lower bound in Theorem~\ref{tree}.
\end{Proposition}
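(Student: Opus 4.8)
Since $n = 6$, the lower bound of Theorem~\ref{tree} reads $u(P_6) \geq \frac{n+2}{8} = 1$, so it suffices to exhibit a strategy for Toucher guaranteeing at most one untouched vertex; this forces $u(P_6) = 1$ and establishes tightness. Label the vertices of $P_6$ as $v_1, \dots, v_6$ and write $e_i = v_i v_{i+1}$, so that $v_1, v_6$ are the two leaves and $e_1, e_5$ are their unique incident edges.

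The plan is to have Toucher open with $e_1$. The guiding principle is that a leaf becomes permanently untouched the instant its unique edge is claimed by Isolator, so Toucher can protect at most one leaf (a single first move cannot pre-empt both $e_1$ and $e_5$), and her real task is therefore to guarantee that one sacrificed leaf is the \emph{only} casualty. By claiming $e_1$ she secures both $v_1$ and $v_2$, and the remaining game is played on $e_2, e_3, e_4, e_5$, where the two players each take two further edges.

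I would then split on Isolator's first reply. If Isolator answers with $e_5$ (the aggressive line, conceding $v_6$), Toucher next takes $e_4$, which covers both $v_4$ and the otherwise hard-to-reach vertex $v_5$; then, whichever of $e_2, e_3$ Isolator leaves, Toucher claims it and thereby touches $v_3$, so $v_6$ is the unique untouched vertex. If instead Isolator's first reply is one of $e_2, e_3, e_4$, then $e_5$ is still available, so $v_6$ can yet be reached; in each of these lines Toucher prioritises the pivotal edge that simultaneously rescues the far side (namely $e_5$ if Isolator has just taken $e_4$, and $e_4$ otherwise), after which a short finite check shows that at most one of the four not-yet-secured vertices $v_3, v_4, v_5, v_6$ escapes her. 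In every branch the outcome is at most one untouched vertex, as required.

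The main obstacle is conceptual rather than computational: both leaves are vulnerable simultaneously and neither can be saved once its edge is gone, so the naive idea of grabbing the central edge $e_3$ first actually \emph{fails} — it frees Isolator to claim both $e_1$ and $e_5$ and thus isolate both leaves, yielding two untouched vertices. The crux is therefore to commit immediately to one leaf with $e_1$ and to recognise that $e_4$ (respectively $e_5$) is the edge whose capture secures the far side once $v_6$ may be lost. The verification over Isolator's replies is then a routine finite case analysis.
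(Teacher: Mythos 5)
Your proof is correct, but it takes a different route from the paper. The paper disposes of this proposition in one line by citing Theorem~\ref{path1}: its upper bound gives $u(P_6) \leq \frac{6+1}{4} = \frac{7}{4}$, hence $u(P_6) \leq 1$ by integrality, which matches the lower bound $\frac{6+2}{8}=1$ of Theorem~\ref{tree}. You instead construct an explicit Toucher strategy from scratch (open with $e_1$, then answer $e_5$ with $e_4$, answer $e_4$ with $e_5$, and answer $e_2$ or $e_3$ with $e_4$, finishing on whichever of the two edges around $v_3$ remains), and I have checked that every branch of your case analysis does leave at most one untouched vertex; your observation that opening with the central edge $e_3$ fails is also accurate. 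What the paper's approach buys is brevity and reuse of machinery already established in the appendix; what yours buys is a self-contained, concrete certificate that does not depend on the (somewhat involved) proof of the upper bound of Theorem~\ref{path1}, and it exhibits the optimal play explicitly. The only cosmetic weakness is that the subcases for Isolator's replies $e_2$, $e_3$, $e_4$ are compressed into ``a short finite check''; since the game tree here is tiny this is acceptable, but spelling out those three lines would make the argument fully verifiable on sight.
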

\begin{proof}
This follows immediately from Theorem~\ref{path1}.
\end{proof}

\section{$\mathbf{3}$-regular graphs} \label{3reg}

Recall that in Section~\ref{2reg}
we considered the case when our playing board is a $2$-regular graph.
The natural generalisation of this is to consider $k$-regular graphs for $k>2$.
However,
we already know from Theorem~\ref{gen2}
that $u(G)=0$ for all $k$-regular $G$ when $k>3$.
Hence, it only remains to now deal with the case when $k=3$. 

We start by giving an upper bound for $u(G)$,
then we focus on constructing $3$-regular examples with $u(G)>0$
(proving Theorem~\ref{u>0example}),
and then finally we observe that there are also $3$-regular examples with $u(G) = 0$.

As mentioned,
we begin with our best known upper bound, 
which is a direct consequence of Theorem~\ref{gen1}.

\begin{Corollary} \label{3regcor}
For any $3$-regular graph $G$ with $n$ vertices, we have
\begin{displaymath}
u(G) \leq \frac{n}{8}.
\end{displaymath}
\end{Corollary}
\begin{proof}
	This follows immediately from Theorem~\ref{gen1}.
\end{proof}

It is not at all straightforward to construct any $3$-regular graphs with $u(G)>0$. However, the following example shows that there do indeed exist such graphs.

\begin{Proposition} \label{3regex}
The $3$-regular graph $G$ depicted in Figure~\ref{3regfig1} will have an untouched vertex. 
\begin{figure} [ht]
\setlength{\unitlength}{1cm}
\centering
\begin{tikzpicture}
\draw(2/3,-2.75) node{\Large{$G$}};

%\draw(2/3,0.5) node{$H_{1}$};
\draw (0,0.5) -- (1/3,3.5/3) -- (1,3.5/3) node[midway, above]{$\large{H_{1}}$} -- (4/3,0.5) -- (2/3,-0.5/3) -- (0,0.5);
\filldraw [black]
(2/3,-0.5/3) circle (2pt)
(0,0.5) circle (2pt)
(4/3,0.5) circle (2pt)
(1/3,3.5/3) circle (2pt)
(1,3.5/3) circle (2pt)
(1.5/3,2/3) circle (2pt)
(2/3,1/3) circle (2pt)
(2.5/3,2/3) circle (2pt);

\draw (2/3,1/3) -- (1.5/3,2/3) -- (2.5/3,2/3) -- (2/3,1/3);
\draw (2/3,1/3) -- (2/3,-0.5/3);
\draw (1.5/3,2/3) -- (1/3,3.5/3);
\draw (2.5/3,2/3) -- (1,3.5/3);

%\draw(6.5/3,-1.5) node{$H_{3}$};
\draw (1.5,-1.5) -- (5.5/3,-2.5/3) -- (2.5,-2.5/3) -- (8.5/3,-1.5) -- (6.5/3,-6.5/3) node[midway, below right]{$\large{H_{3}}$} -- (1.5,-1.5);
\filldraw [black]
(6.5/3,-6.5/3) circle (2pt)
(1.5,-1.5) circle (2pt)
(8.5/3,-1.5) circle (2pt)
(5.5/3,-2.5/3) circle (2pt)
(2.5,-2.5/3) circle (2pt)
(6/3,-4/3) circle (2pt)
(6.5/3,-5/3) circle (2pt)
(7/3,-4/3) circle (2pt);

\draw (6.5/3,-5/3) -- (6/3,-4/3) -- (7/3,-4/3) -- (6.5/3,-5/3);
\draw (6.5/3,-5/3) -- (6.5/3,-6.5/3);
\draw (6/3,-4/3) -- (5.5/3,-2.5/3);
\draw (7/3,-4/3) -- (2.5,-2.5/3);

% \draw(-2.5/3,-1.5) node{$H_{2}$};
\draw (-1.5,-1.5) -- (-3.5/3,-2.5/3) -- (-0.5,-2.5/3) -- (-0.5/3,-1.5) -- (-2.5/3,-6.5/3) -- (-1.5,-1.5) node[midway, below left]{$\large{H_{2}}$} ;
\filldraw [black]
(-2.5/3,-6.5/3) circle (2pt)
(-1.5,-1.5) circle (2pt)
(-0.5/3,-1.5) circle (2pt)
(-3.5/3,-2.5/3) circle (2pt)
(-0.5,-2.5/3) circle (2pt)
(-3/3,-4/3) circle (2pt)
(-2.5/3,-5/3) circle (2pt)
(-2/3,-4/3) circle (2pt);

\draw (-2.5/3,-5/3) -- (-3/3,-4/3) -- (-2/3,-4/3) -- (-2.5/3,-5/3);
\draw (-2.5/3,-5/3) -- (-2.5/3,-6.5/3);
\draw (-3/3,-4/3) -- (-3.5/3,-2.5/3);
\draw (-2/3,-4/3) -- (-0.5,-2.5/3);

\draw (-0.5/3,-1.5) -- (1.5,-1.5)  node[midway, above]{$e_{23}$};
\draw (8.5/3,-1.5) .. controls (3.5,-1) and (2,0.5) .. (4/3,0.5) node[pos=0.6, right]{$e_{13}$};
\draw (-1.5,-1.5) .. controls (-6.5/3,-1) and (-2/3,0.5) .. (0,0.5) node[pos=0.6, left]{$e_{12}$};

%\draw(15.5/3,-0.5) node{$H_{i}$};
%\draw (4.5,-0.5) -- (14.5/3,0.5/3) -- (5.5,0.5/3) -- (17.5/3,-0.5) -- (15.5/3,-3.5/3) -- (4.5,-0.5);
%\filldraw [black]
%(15.5/3,-3.5/3) circle (2pt)
%(4.5,-0.5) circle (2pt)
%(17.5/3,-0.5) circle (2pt)
%(14.5/3,0.5/3) circle (2pt)
%(5.5,0.5/3) circle (2pt);
%\draw (4.5,-0.5) -- (12.5/3,-0.5);
%\draw (17.5/3,-0.5) -- (18.5/3,-0.5);
%
%\draw(20/3,-0.5) node{$=$};
%
%\filldraw [black]
%(10,-2.5) circle (2pt)
%(8,-0.5) circle (2pt)
%(10,-1) circle (2pt)
%(9.5,0) circle (2pt)
%(10.5,0) circle (2pt)
%(12,-0.5) circle (2pt)
%(9,1.5) circle (2pt)
%(11,1.5) circle (2pt);
%\draw (8,-0.5) -- (9,1.5) -- (11,1.5) -- (12,-0.5) -- (10,-2.5) -- (8,-0.5);
%\draw (10,-1) -- (9.5,0) -- (10.5,0) -- (10,-1);
%\draw (8,-0.5) -- (7.5,-0.5);
%\draw (12,-0.5) -- (12.5,-0.5);
%\draw (10,-2.5) -- (10,-1);
%\draw (9.5,0) -- (9,1.5);
%\draw (10.5,0) -- (11,1.5);
\end{tikzpicture}
\caption{A $3$-regular graph $G$ satisfying $u(G) \geq 1$.} \label{3regfig1}
\end{figure}
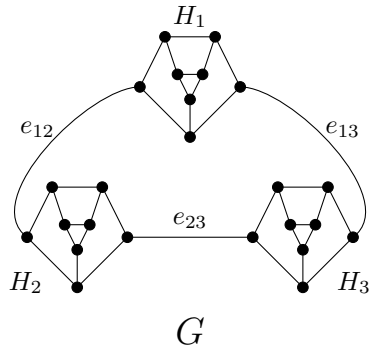
\end{Proposition}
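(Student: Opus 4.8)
The plan is to give Isolator an explicit strategy and to show that, against any play by Toucher, he can claim all three edges at some single vertex, leaving it untouched. Note first that the general lower bound of Theorem~\ref{gen1} is vacuous here, since $\sum_{v}2^{-d(v)}-\frac{|E(G)|+7}{8}=3-\frac{43}{8}<0$, so a hands-on strategy is unavoidable. I would focus on the nine \emph{inner} vertices, i.e.\ the three vertices of the central triangle inside each gadget $H_1,H_2,H_3$, because each such vertex has all three of its incident edges (two triangle edges and one spoke) lying inside a single gadget, and is therefore the most natural target for Isolator.

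The technical heart is the following local observation, which I would prove first. Call the three triangle edges and the three spokes of a gadget its six \emph{core} edges. Suppose that at some moment Isolator has claimed two of the three triangle edges of one gadget while Toucher holds \emph{none} of that gadget's six core edges. Writing the two Isolator-owned triangle edges as $t_{xy}$ and $t_{yz}$ (they necessarily share the inner vertex $q_y$), the vertex $q_y$ then needs only its spoke $s_y$ to be isolated; so on her move Toucher is forced to take $s_y$, for otherwise Isolator takes it and isolates $q_y$. But Isolator now takes the last triangle edge $t_{zx}$, after which \emph{both} $q_x$ and $q_z$ need only their still-free spokes $s_x,s_z$, and Isolator completes this double threat on his next move. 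Hence two triangle edges in a core that Toucher has not yet entered always force an isolation.

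The main work, and the main obstacle, is to force the hypothesis of this observation in at least one gadget. This is genuinely subtle, because a \emph{single} gadget can always be defended: if Toucher may answer every move inside it, she can grab a triangle edge (touching two inner vertices at once) and reduce the core to a position Isolator cannot complete. The proof must therefore exploit the three-fold redundancy together with the fact that Toucher plays first. I would let Isolator repeatedly invest a triangle edge in a gadget whose core Toucher has not yet entered; to keep pace, Toucher must spend one move \emph{contaminating} each core, and crucially these contaminating moves are triangle edges, which touch \emph{only} inner vertices and none of the five peripheral (pentagon and connector) vertices of a gadget. I would then run a budget argument: Toucher cannot both neutralise all three cores in time and also claim an edge at every peripheral vertex, so as soon as she commits to the contamination race, Isolator switches to isolating a neglected pentagon or connector vertex by the same local forcing (two incident edges secured while the third stays free).

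To make this rigorous I would reduce to a finite case analysis on Toucher's opening moves, using the symmetry of the configuration: the rotation permuting $H_1,H_2,H_3$ and the reflection of each gadget fixing the spoke vertex adjacent to both connectors and swapping the two connectors. Thus Toucher's first move falls into only a handful of orbits --- a triangle edge, a spoke, a pentagon edge incident or not incident to a connector, and an external edge $e_{ij}$ --- and for each opening I would prescribe Isolator's response and verify, via the two local forcing mechanisms above, that he reaches an isolation. The step I expect to be hardest is exactly this global bookkeeping: since any one gadget is individually defensible, everything hinges on proving that Toucher is genuinely overloaded, i.e.\ that the obligation to touch all $24$ vertices, combined with Isolator's ability to threaten inner and peripheral vertices in parallel across the three gadgets, leaves her without a simultaneous defence.
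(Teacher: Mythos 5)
Your local forcing observation for the inner triangle is sound, and it is exactly the mechanism the paper uses in the case where Toucher stays out of the core: Isolator obtains two triangle edges at a common inner vertex, Toucher must concede that vertex's spoke, and the third triangle edge then creates a double threat on the two remaining spokes. The gap is in the global part. Your ``budget argument'' is only a plan, and pure counting cannot close it: Toucher's $18$ edges can in principle cover all $24$ vertices with room to spare (a perfect matching already covers them with $12$), so everything rests on the forcing structure that you defer to ``global bookkeeping'' and never supply. Moreover, the race you describe (Isolator investing one triangle edge in each fresh gadget while Toucher contaminates each core in reply) leaves Isolator, after three investments, with a single triangle edge per gadget and no edge at any peripheral vertex; isolating a fresh degree-$3$ peripheral vertex from that position requires Toucher to ignore it for two consecutive rounds, and you give no reason why she must.

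The missing idea is the opening move. In the paper, Isolator's first move is the connector $e_{23}$: since Toucher's first move lies (wlog) in $H_{1}\cup e_{12}\cup e_{13}$ and her second move is a single edge, at least one of $H_{2},H_{3}$ --- say $H_{3}$ --- is still completely Toucher-free after two rounds, Isolator moves first in it, and Isolator \emph{already owns an edge at its attachment vertex} $v_{4}$. That last point is what makes a single-gadget analysis close: if Toucher enters the core after Isolator's triangle edge, Isolator does not need any cross-gadget overload; he pivots inside $H_{3}$, playing a spoke and then pentagon edges so that each forced reply by Toucher sets up the next threat, with the chain terminating at $v_{4}$, whose third edge $e_{23}$ he already holds. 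Your premise that ``a single gadget can always be defended'' is therefore too pessimistic once the connector is pre-claimed, and it is what pushes you toward an unexecuted (and, as stated, unclosable-by-counting) three-gadget argument. To repair the proof, either exhibit the overload strategy with a full case analysis, or --- more simply --- adopt the connector opening and verify the three cases inside one gadget.
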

\begin{proof}
First,
let us observe that $G$ consists of three identical blocks
$H_{1}$, $H_{2}$, and $H_{3}$,
together with the edges $e_{12}$, $e_{13}$, and $e_{23}$.
Thus, by symmetry, wlog Toucher makes her first move somewhere in
$H_{1} \cup e_{12}
\cup e_{13}
$.
Let Isolator then take the edge $e_{23}$,
and note that wlog Toucher's next move is not in $H_{3}$.

From this point on,
we shall just focus on the graph
$H_{3}$,
as shown in Figure~\ref{3regfig2}.
Recall that Isolator has already taken the edge $e_{23}$,
all the edges in $H_{3}$ are as yet unplayed
(it will not matter whether or not the edge $e_{13}$ has been taken),
and Isolator has the next move.
Let Isolator use this move to take the internal edge $u_1u_2$ marked with an $I$.

\begin{figure} [ht]
\setlength{\unitlength}{1cm}
\centering
\begin{tikzpicture}
\filldraw [black]
(2,-0.5) circle (2pt) node[anchor=north] {$v_{3}$}
(0,1.5) circle (2pt) node[anchor=north] {$v_{4}$}
(2,1) circle (2pt) node[anchor=east] {$u_{1}$}
(1.5,2) circle (2pt) node[anchor=east] {$u_{2}$}
(2.5,2) circle (2pt) node[anchor=west] {$u_{3}$}
(4,1.5) circle (2pt)
(1,3.5) circle (2pt) node[anchor=south] {$v_{1}$}
(3,3.5) circle (2pt) node[anchor=south] {$v_{2}$};
\draw (0,1.5) -- (1,3.5) node[midway, left]{$3b$} -- (3,3.5) node[midway, above]{$3d$} -- (4,1.5) node[midway, right]{$3e$} -- (2,-0.5) node[midway, below right]{$3c$} -- (0,1.5) node[midway, below left]{$3a$};
\draw (2,1) -- (1.5,2) node[midway, left]{$I$} -- (2.5,2) node[midway, above]{$2a$} -- (2,1) node[midway, right]{$1a$};
\draw (0,1.5) -- (-1,1.5) node[midway, above]{$e_{23}$} node[midway, below]{$I$};
\draw (4,1.5) -- (5,1.5) node[midway, above]{$e_{13}$} node[midway, below]{$?$};
\draw (2,-0.5) -- (2,1) node[pos=0.65, right]{$1b$};
\draw (1.5,2) -- (1,3.5) node[pos=0.6, right]{$2b$};
\draw (2.5,2) -- (3,3.5) node[pos=0.6, left]{$2c$};
\end{tikzpicture}
\caption{The graph $H_{3}$.} \label{3regfig2}
\end{figure}
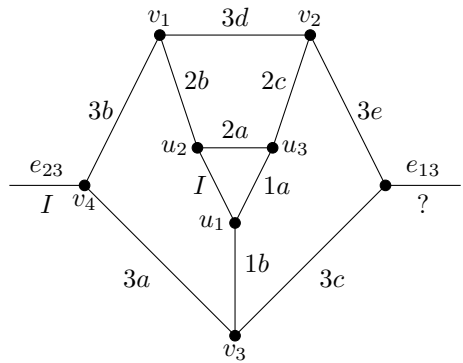

\paragraph{Case (i):} Toucher does not take one of the `inner'-edges
(i.e.~those labelled with a $1$ or a $2$) in her next move. 

Then all these inner edges are still unplayed, and the inner-vertices $u_{1}$, $u_{2}$, and $u_{3}$ are still untouched.
Thus, Isolator may then take $1a$, Toucher is forced to take $1b$ (to avoid $u_{1}$ becoming isolated),
Isolator may then take $2a$, Toucher is forced to take $2b$ (to avoid $u_{2}$ becoming isolated),
and Isolator may then take $2c$ and hence isolate $u_{3}$.

\paragraph{Case (ii):} Toucher takes one of the edges labelled with a $1$ in her next move. 

Then all the edges labelled with a $2$ or a $3$ are still unplayed,
and the vertices $u_{2}$, $v_{1}$, $v_{2}$, and $v_{4}$ are still untouched.
Thus, Isolator may then take $2b$,
Toucher is forced to take $2a$ (to avoid $u_{2}$ becoming isolated), Isolator may then take $3b$, Toucher is forced to take $3d$
(to avoid $v_{1}$ becoming isolated), and Isolator may then take $3a$ and hence isolate $v_{4}$.

\paragraph{Case (iii):} Toucher takes one of the edges labelled with a $2$ in her next move. 

Then all the edges labelled with a $1$ or a $3$ are still unplayed, and the vertices $u_{1}$, $v_{3}$, and $v_{4}$ are still untouched.
Thus, Isolator may then take $1b$, Toucher is forced to take $1a$ (to avoid $u_{1}$ becoming isolated),
Isolator may then take $3a$, Toucher is forced to take $3c$ (to avoid $u_{3}$ becoming isolated),
and Isolator may then take $3b$ and hence isolate $v_{4}$.
\end{proof}

Using Proposition~\ref{3regex},
we may now prove Theorem~\ref{u>0example}.

\begin{proof}[Proof of Theorem~\ref{u>0example}]
Note that the graph $G$ in Proposition~\ref{3regex} has $24$ vertices.
Hence, we may simply take $\left \lfloor \frac{n}{24} \right \rfloor$ components identical to $G$,
and any $3$-regular graph on the other vertices.
\end{proof}

Recall that in the $2$-regular case
(see Theorem~\ref{2reg2}), 
the only graph for which $u(G)=0$ is the triangle.
However, it turns out that there are infinitely many $3$-regular graphs for which there will be no untouched vertices.

\begin{Proposition}
Any graph consisting of $K_{4}$ components will have no untouched vertices.
\end{Proposition}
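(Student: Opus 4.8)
The plan is to reduce the whole game to a single $K_4$ and then lift a local strategy across the components. The core is the following claim: \emph{on a single $K_4$, Toucher can guarantee that all four of its vertices are touched, even when Isolator moves first in it.} Label the vertices $x,y,z,w$. When Isolator first claims an edge of this $K_4$, say $xy$, Toucher responds with the unique vertex-disjoint (``opposite'') edge $zw$; in $K_4$ this edge exists and is still free, and it immediately touches both $z$ and $w$, so those two vertices are safe for the rest of the game. Exactly four edges of the $K_4$ now remain, and they split into the two vertex-pairs $\{xz,xw\}$ (both incident to $x$) and $\{yz,yw\}$ (both incident to $y$). Toucher now plays the obvious pairing strategy on these two pairs: whenever Isolator takes an edge of one pair, Toucher immediately takes its partner, thereby touching $x$ (respectively $y$). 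Since the only free edge incident to $x$ lies in the first pair and the only free edge incident to $y$ lies in the second pair, Isolator can never grab both edges of a pair before Toucher answers, and so $x$ and $y$ are both eventually touched. This also shows that after the opposite-edge response no single Isolator move can create two simultaneous isolation threats that Toucher cannot defuse at once.

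To lift this to a graph consisting of several $K_4$ components, I would have Toucher run the single-component strategy in each component in parallel, giving top priority to responding in the component in which Isolator has just played. Concretely: if Isolator opens a fresh component with an edge, Toucher answers with the opposite edge there; if Isolator plays a pair-edge in a component that is already in its pairing phase, Toucher immediately takes the partner edge. Because Toucher's in-pair responses always take priority and are played one-for-one against Isolator, Isolator can never win a ``pair race'' in any component: in every component each critical vertex $x$ and $y$ acquires a Toucher edge before Isolator can seize both of its relevant edges. Toucher's extra first move (she moves first globally) and any turn on which no forced in-pair response is pending are harmless: she simply makes a developing move, opening a component with an arbitrary edge or completing an outstanding pair in some not-yet-finished component, which only adds touched vertices.

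The main obstacle, and the part that needs the most care, is the global \textbf{scheduling and tempo bookkeeping}, rather than any single local computation. Two points must be pinned down. First, one must verify that Toucher's priority response is always actually available on her turn, i.e.\ that threats in different components never ``collide'': this follows because she answers each Isolator move with a single reply, so at most one unanswered Isolator move exists at any time, and a $K_4$ has an even number ($6$) of edges, which keeps the local move-parities aligned. Second, one must ensure that components Isolator \emph{abandons} are still completed by Toucher; here the key is that the game runs until all $6$ edges of every component are claimed, so Toucher's surplus and developing moves can always be spent finishing the outstanding pairs of an abandoned component, and since Isolator took at most one edge of each such pair before quitting, touching $x$ and $y$ there is never in jeopardy. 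Assembling these observations into a single explicit strategy that provably leaves no vertex isolated in any component is the delicate step; everything else reduces to the short single-$K_4$ analysis above.
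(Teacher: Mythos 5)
Your proposal is correct and follows essentially the same route as the paper: after Isolator opens a $K_4$ component, Toucher answers with the vertex-disjoint edge and then pairs the remaining four edges into the two stars at the Isolator-touched vertices, responding always in the component Isolator just played in. The paper's proof is exactly this local strategy (it simply states the pairing move-by-move and leaves the cross-component scheduling implicit), so your extra discussion of tempo, while reasonable, is not adding a genuinely different idea.
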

\begin{proof}
Whenever Isolator plays first in a component (taking the edge $v_{1}v_{2}$, say), Toucher can then immediately take the non-adjacent edge from this same component
(let us denote this edge by $v_{3}v_{4}$).
Wlog (by symmetry), when Isolator plays again in this component he takes the edge $v_{1}v_{3}$, in which case Toucher can then immediately take the edge $v_{1}v_{4}$.
Whenever Isolator takes one of the two remaining edges in this component (note that both of these will be incident to $v_{2}$), Toucher can then immediately take the final edge
and will hence have touched all four vertices.
\end{proof}

\section{Discussion} \label{discussion}

Perhaps the most interesting unresolved issue concerns the asymptotic proportion of untouched vertices in $C_{n}$ and $P_{n}$.
We have shown 
in Theorem~\ref{cyc2} and Theorem~\ref{path1}
that this is somewhere between
$\frac{3}{16}$ and $\frac{1}{4}$,
but where exactly? Could it perhaps be $\frac{1}{5}$?
One intuitive reason for this is that Isolator needs two moves to isolate one vertex,
but Toucher can touch four vertices in this time, so we might expect that there should consequently be four times as many touched vertices as untouched. However, we have not managed to turn this reasoning into a formal argument.

Throughout this paper, whenever we have derived a bound, we have also tried to give tight examples that hold for infinitely many values of $n$.
However, in the case of our lower bound for $u(T)$ in Theorem~\ref{tree}, we only managed to provide one tight example, in Proposition~\ref{treeex2}.
Hence, it would be interesting to know whether there are other tight examples, or if in fact this lower bound can be improved for large $n$.
Also, what type of tree is most suitable for Toucher? Recall that we showed in Proposition~\ref{treeex1} that stars are the best choice for Isolator.

As we have seen in Remark~\ref{degseqremark} and Section~\ref{3reg},
we cannot hope to obtain exact results just by looking at the degree sequence of the graph. Hence, we are curious to know if any other properties or parameters of the graph can be utilised to give more precise bounds.

Finally, what is the largest possible proportion of untouched vertices for a $3$-regular graph? By Theorem~\ref{u>0example} and Corollary~\ref{3regcor}, we know that this is between $\frac{1}{24}$ and $\frac{1}{8}$.

\appendix
\section{Paths} \label{paths}

We now prove Theorem~\ref{path1} on paths
(recall that this result was used in the proofs of both 
Theorem~\ref{tree} and Proposition~\ref{treeex2}).
Clearly, the games on $P_n$ and $C_n$ are very closely related
(in fact, the game on $P_{n}$ is exactly equivalent to a game on $C_{n}$ in which Isolator has the first move),
and so the proofs here are similar to those for cycles.

\begin{proof}[Proof of upper bound of Theorem~\ref{path1}]
	For $1< n\leq 4$, knowing that Toucher is the first to play, the result follows easily. For $n>4$, we add a slight refinement to the analysis given in the proof of Theorem~\ref{gen1},
	again considering the strategy where Toucher always chooses the edge
	which maximises the sum of the Dangers of the two vertices incident to it.
	
	At the beginning of the game, the total Danger is $\sum_{v\in V(P_n)} 2^{-d(v)}=\frac{n+2}{4}$. Going through all possible cases, we see that Toucher will always decrease the total Danger by at least $\frac{6}{4}$ in her first two moves, while Isolator will only increase it by at most $\frac{5}{4}$ in his first two moves. Therefore, after these first two pairs of moves,
	the total Danger will have decreased by at least $\frac{1}{4}$. Thus, continuing as in the proof of Theorem~\ref{gen1},
	we hence obtain
	\begin{displaymath} u(P_n)\leq \sum_{v\in V(P_n)} 2^{-d(v)} - \frac{1}{4}=\frac{n+1}{4}. \qedhere
	\end{displaymath}
\end{proof}

\begin{Proposition}
	The graph $P_{3}$
	provides a tight example to the upper bound in Theorem~\ref{path1}.
	\qed
\end{Proposition}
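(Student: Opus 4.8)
The plan is to verify that the general upper bound of Theorem~\ref{path1} is attained exactly when $n=3$. First I would substitute $n=3$ into the bound $\frac{n+1}{4}$, obtaining the value $1$. Since $u(P_3)$ is a non-negative integer and the upper bound already gives $u(P_3) \leq 1$, it therefore suffices to show the reverse inequality $u(P_3) \geq 1$, i.e.\ that at least one vertex is guaranteed to remain untouched.

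This lower bound is immediate from the structure of the board, and requires no clever strategy from Isolator. The path $P_3$ has exactly three vertices and exactly two edges, so the whole game lasts only two moves, one by each player. Hence Toucher claims precisely one edge over the course of the game, and that single edge is incident to exactly two of the three vertices. The third vertex is then incident only to the edge claimed by Isolator, and so it is never touched. This shows $u(P_3) \geq 1$ regardless of how either player plays.

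Combining this with the upper bound $u(P_3) \leq \frac{3+1}{4} = 1$ from Theorem~\ref{path1} yields $u(P_3) = 1$, so $P_3$ is indeed a tight example. There is no genuine obstacle here: the only content is the arithmetic check that $\frac{n+1}{4}=1$ when $n=3$, together with the trivial observation that a single edge can touch at most two vertices. This is why the statement can be recorded without an explicit proof in the text.
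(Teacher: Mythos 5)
Your proposal is correct and matches the (omitted, hence \qed-marked) argument the paper intends: the bound evaluates to $1$ at $n=3$, and since Toucher's single edge touches only two of the three vertices, the remaining leaf is always isolated, giving $u(P_3)=1$. Nothing further is needed.
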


For the lower bound, 
the key ingredient is Lemma~\ref{3of16lemma}, which enables Isolator to guarantee three untouched vertices from every sixteen edges.
As with $C_n$,
the main remaining issue is to deal with the leftover portion when the number of edges is not divisible by $16$.
This time, the argument is further complicated by the fact that Isolator will need to take advantage of the two leaves.

\begin{proof}[Proof of the lower bound of Theorem~\ref{path1}.]
	Let $k \in \{0,1,\ldots,15\}$
	denote the value of $(n-1) \bmod 16$.
	If $k \in \{0,1\}$,
	let $x=0$;
	if $k \in \{2,3,4,5,6\}$,
	let $x=1$;
	if $k \in \{7,8,9,10,11\}$,
	let $x=2$;
	and if $k \in \{12,13,14,15\}$,
	let $x=6$.
	Let $y = k-x \geq 0$.
	
	Before Toucher makes her first move,
	let Isolator partition the $n-1$ edges of $P_{n}$ into
	a `left-end' segment of $x$ consecutive edges,
	middle segments each of $16$ consecutive edges,
	and a `right-end' segment of $y$ consecutive edges.
	
	Let Isolator then use the strategy of always responding
	in the same segment in which Toucher played her previous move.
	By Lemma~\ref{3of16lemma},
	Isolator can thus guarantee that the number of untouched internal vertices
	in each $16$-edge segment
	will be at least three.
	
	Note that the statement of the theorem
	is equivalent to $u(P_{n}) \geq \frac{3}{16}(|E(P_{n})|-1)$.
	Hence,
	since $k$ is equal to the total number of edges in the two end segments,
	it now suffices to show that Isolator can guarantee
	isolating at least $\frac{3}{16}(k-1)$ vertices here.
	
	Throughout the remainder of the proof,
	note that we shall use the word `leaf' solely for the two leaves in $P_{n}$.
	Moreover,
	we shall not attempt to isolate
	the right-most vertex of the left-end segment
	or the left-most vertex
	of the right-most segment.
	
	If $k \leq 1$,
	then there is nothing to prove.
	
	If $k \in \{2,3,4,5,6\}$,
	then we need to show that Isolator can isolate at least one vertex.
	Recall $x=1$,
	so $y = k-x \geq 1$.
	Hence,
	as soon as Toucher takes an edge incident to one of the leaves,
	Isolator can simply take the edge incident to the other leaf,
	thus isolating it.
	
	If $k \in \{7,8,9,10,11\}$,
	then we need to show that Isolator can isolate at least two vertices.
	Recall $x=2$,
	so $y = k-x \geq 5$.
	As soon as Toucher takes an edge from one of the end segments
	(let us use $A$ to denote this segment),
	let Isolator take the edge incident to the leaf in the other end segment
	(let us use $B$ to denote this segment),
	thus isolating it.
	After Toucher's second move,
	we may assume that Toucher's two edges consist of
	the edge adjacent to Isolator's edge in Segment~$B$
	and the edge incident to the leaf in Segment~$A$
	(since otherwise Isolator could then take one of these,
	and we would be done).
	Hence,
	since $y \geq 5$,
	the right-end segment must certainly still contain three consecutive free edges,
	so we are done by Claim~\ref{cl1}.
	
	If $k \in \{12,13,14,15\}$,
	then we need to show that Isolator can isolate at least three vertices.
	Recall $x=6$,
	so $y \geq 6$ too.
	As soon as Toucher takes an edge from one of the end segments
	(let us again use $A$ to denote this segment),
	let Isolator take the edge incident to the leaf in the other end segment
	(let us again use $B$ to denote this segment),
	thus isolating it.
	
	Let us denote the first six edges in $A$,
	starting from the leaf,
	as $a_{1}, a_{2}, \ldots, a_{6}$,
	and let us similarly denote the first six edges in $B$,
	starting from the leaf,
	as $b_{1}, b_{2}, \ldots, b_{6}$.
	Hence,
	Isolator has claimed $b_{1}$.
	
	If Toucher's first two edges consist of $b_{2}$ and $a_{1}$,
	then $A$ still contains five consecutive free edges
	and $B$ still contains four consecutive free edges.
	By Claim~\ref{cl2},
	either Isolator can then isolate two internal vertice in $A$
	and is done,
	or he can isolate one internal vertex in $A$
	and then also one internal vertex in $B$
	(using Claim~\ref{cl1}),
	and is again done.
	
	If Toucher's first two edges are not $b_{2}$ and $a_{1}$,
	then Isolator may claim one of these with his second move,
	thus isolating a second vertex.
	If Isolator takes $b_{2}$,
	then we may assume that Toucher's first three edges include both $b_{3}$ and $a_{1}$
	(since otherwise Isolator could then also take one of these,
	and we would be done),
	so at least one of $A$ or $B$ will still contain three consecutive free edges,
	and so we may then just apply Claim~\ref{cl1}.
	Similarly,
	if Isolator takes $a_{1}$,
	then we may assume that Toucher's first three edges include both $a_{2}$ and $b_{2}$,
	and we can then use exactly the same argument.
\end{proof}

\begin{Proposition}
	The graph $P_{2}$
	provides a tight example to the lower bound in Theorem~\ref{path1}.
	\qed
\end{Proposition}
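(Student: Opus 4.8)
The plan is to verify the claim by direct evaluation, exploiting the fact that $P_2$ is the smallest possible board. First I would observe that $P_2$ consists of a single edge joining its two vertices, so the whole game lasts exactly one move: since Toucher plays first and there is only one edge available, she is forced to claim it, thereby touching both of its endpoints. Consequently no vertex can ever be left untouched, and so $u(P_2)=0$.

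Next I would substitute $n=2$ into the lower bound of Theorem~\ref{path1}, which gives $\frac{3}{16}(n-2)=\frac{3}{16}(2-2)=0$. Since this matches the value $u(P_2)=0$ computed above, the inequality of Theorem~\ref{path1} is attained with equality for $P_2$, which is precisely what it means for $P_2$ to be a tight example. The argument therefore reduces to combining the forced game value with a single arithmetic substitution.

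I do not expect any genuine obstacle here, since with only one edge on the board the play is completely determined and no strategic analysis is required. The one point I would state carefully is the turn order: the convention underlying Theorem~\ref{path1} is that Toucher moves first, so the unique edge of $P_2$ is claimed by Toucher rather than by Isolator, which is exactly why both vertices end up touched. Once this is noted, the verification is immediate and the tightness follows at once.
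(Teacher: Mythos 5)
Your verification is correct and is exactly the argument the paper leaves implicit (it marks this proposition as immediate with no written proof): $P_2$ has a single edge which Toucher, moving first, must claim, so $u(P_2)=0=\tfrac{3}{16}(2-2)$. Nothing further is needed.
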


\end{document}